\documentclass[12pt,reqno]{amsart}

\usepackage{amsmath,amsthm,amssymb,comment,fullpage}
\usepackage{braket}
\usepackage{mathtools}
\usepackage{enumitem}

\usepackage{caption}
\usepackage{times}
\usepackage[T1]{fontenc}
\usepackage{mathrsfs}
\usepackage{latexsym}
\usepackage[dvips]{graphics}
\usepackage{epsfig}
\usepackage{amsmath,amsfonts,amsthm,amssymb,amscd}
\input amssym.def
\input amssym.tex
\usepackage{color}
\usepackage{hyperref}
\usepackage{url}
\newcommand{\bburl}[1]{\textcolor{blue}{\url{#1}}}

\usepackage{pgfplots}
\usepackage{pgfplotstable}
\pgfplotsset{compat=1.18}
\usepackage{tikz}
\usepackage{tkz-tab}
\usepackage{tkz-graph}
\usepackage[frozencache,cachedir=.]{minted} 
\usepackage{xcolor}
\usetikzlibrary{shapes.geometric,positioning}

\usepackage[
backend=biber,
style=alphabetic,
sorting=nty
]{biblatex}
\addbibresource{bibliography.bib}

\usepackage{dcolumn}
\newcolumntype{d}[1]{D{.}{.}{#1}}

\newtheorem{thm}{Theorem}[section]

\theoremstyle{definition}

\theoremstyle{plain}

\newtheorem{definition}[thm]{Definition}
\newtheorem{example}[thm]{Example}
\newtheorem{lemma}[thm]{Lemma}
\newtheorem{proposition}[thm]{Proposition}
\newtheorem{theorem}[thm]{Theorem}

\numberwithin{equation}{section}

\definecolor{LightGray}{gray}{0.9}


\title{On a Roll Again: Analysis of a Dice Removal Game}

\author{Francesco Camellini$^1$}
\address{$^1$}
\email{\bburl{francesco.camellini@mail.polimi.it}}
\address{Politecnico di Milano, Milan, 20133}

\author{Wissam Ghantous$^2$}
\address{$^2$}
\email{\bburl{wissam.ghantous@ucf.edu}}
\address{Department of Mathematics, University of Central Florida, Orlando, Florida}

\author{Andrea M. Lanocita$^3$}
\address{$^3$}
\email{\bburl{andreamaria.lanocita@mail.polimi.it}}
\address{Politecnico di Milano, Milan, 20133}

\author{Layna E. Mangiapanello$^4$}
\address{$^4$}
\email{\bburl{lm943s@missouristate.edu}}
\address{Missouri State University, Springfield, Missouri, 65807}

\author{Steven J. Miller$^5$}
\address{$^5$}
\email{\bburl{sjm1@williams.edu}}
\address{Department of Mathematics, Williams College, Williamstown, MA 01267, USA}

\author[G. Tresch]{Garrett Tresch$^6$}
\address{$^6$}
\email{\textcolor{blue}{\href{mailto:treschgd@tamu.edu}{treschgd@tamu.edu}}}
\address{Department of Mathematics\\ Texas A\&M University, College Station, TX 77843, USA}

\author{Elif Z. Yildirim$^7$}
\address{$^7$}
\email{\bburl{zumra.yildirim@ug.bilkent.edu.tr}}
\address{Bilkent University, Ankara, 06800}

\thanks{We thank all of Polymath Jr's staff, whose work has been key to the writing of this paper. In particular, we extend our gratitude to Chris Yao. We also thank the referee who reviewed our submission to the PUMP Journal of Undergraduate Research for numerous suggestions that greatly improved this manuscript. Lastly, we thank the editorial staff at the PUMP Journal of Undergraduate Research for their work on the journal. This project was funded by NSF Grant DMS2341670.}

\keywords{Probability Theory, Order Statistics, Maximum of Geometric Sequence. Math Codes: 62G30.}

\date{\today}

\begin{document}

    \begin{abstract} 
    Suppose we have $n$ dice, each with $s$ faces (assume $s\geq n$). On the first turn, roll all of them, and remove from play those that rolled an $n$. Roll all of the remaining dice. In general, if at a certain turn you are left with $k$ dice, roll all of them and remove from play those that rolled a $k$. The game ends when you are left with no dice to roll. 
    For $n,s \in \mathbb{N} \setminus \{0\}$ such that $s \geq n$, let $Y_n^s$ be the random variable for the number of turns to finish the game rolling $n$ dice with $s$ faces.
    We find recursive and non-recursive solutions for $\mathbb{E}(Y_n^{s})$ and $\mathrm{Var}(Y_n^{s})$, and bounds for both values. Moreover, we show that $Y_n^{s}$ can also be modeled as the maximum of a sequence of i.i.d. geometrically distributed random variables. 
    Although, as far as we know, this game hasn't been studied before, similar problems have. The fixed strategy approach to the TENZI dice game analyzed in \cite{veatch2021analysis} is a particular case of our probabilistic model. Other problems involving trials and successes have been explored extensively, such as the Coupon Collector's Problem, discussed in \cite{isaac1996pleasures}.
    \end{abstract}

    \maketitle

    \tableofcontents

	\section{Introduction}\label{s:1}
    As explained in \cite{907cee13-a93d-3906-a37a-8cab426b7d72}, dice have been around since around 3000 BCE. Similarly, there are records of dice games dating as far back as the Roman era, when soldiers had the right to play dice in camp. One of the most popular games involved throwing three dice and summing the numbers shown. This is thought to be the first game studied using mathematical calculations, although the text that illustrates them, called "De Vetula", dates back to the 13th century. Of course, the aim wasn't scientific per se, but moral: the author wanted to describe the game, and thus inform the reader on how he could be ruined. In the following centuries, dice games began to be used to introduce or study the theory of probability. For example, Jakob Bernoulli in \cite{bernoulli1713ars} uses a convolution formula to calculate the chances of the aforementioned game with an arbitrary number of dice, although he only presents the results and not the formula itself, which is supposed to be derived by the reader. Nowadays, the primary aim of studying these games is educational: they are taught as a fun way to present mathematical concepts to students. However, they can still be very valuable in mathematical research. For example, in this paper, using a concrete example, such as the die game, as a starting point helps to get a sense of the equivalence between two formulas to calculate the expected value and variance of the maximum of i.i.d. geometric random variables, which is very complicated to demonstrate mathematically. \\

    The game we study works as follows: Suppose we have $n$ dice, each with $s$ faces (assume $s\geq n$). On the first turn, roll all of them, and remove from play those that rolled an $n$. Roll all of the remaining dice. In general, if at a certain turn you are left with $k$ dice, roll all of them and remove from play those that rolled a $k$. The game ends when you are left with no dice to roll. \\

    As an example, let's simulate a simple game.
    \begin{example} \label{ex:4,6}
        Suppose we have 4 dice, each with 6 faces. We roll all of them on the first turn: $3224$. We remove from the game the fourth die. On the second turn, roll all three remaining dice: $363$. Now we remove from the game the first and third dice. On the third turn, roll the remaining die: $4$. We roll that die again on the fourth turn: $1$. The game ends in four turns.
    \end{example}

    Although, as far as we know, this game hasn't been studied before, similar problems have. The fixed strategy approach to TENZI analyzed in \cite{veatch2021analysis} has a similar solution to our problem. TENZI is a game where each player is given 10 dice with 6 faces and can roll any subset of them. The player wins when all dice show the same number. The fixed strategy for this game is based on choosing a number between 1 and 6 before playing. During the game, you stop rolling forever all of the dice that show the chosen number, while you keep rolling the others. Under these conditions, we can apply to this game the same probabilistic model we used for ours, provided we restrict it to 6 faces and 10 dice (i.e., our model is more general). In fact, that is exactly what is done in \cite{veatch2021analysis}. However, their analysis only provides formulas for the expected value, while ours also finds formulas and bounds for the variance. Furthermore, as said above, we don't restrict our analysis to a particular number of dice or faces for each die.
    
    Other problems involving successes and trials have been explored extensively, such as the Coupon Collector's Problem discussed in \cite{isaac1996pleasures}.  \\

    For $n,s \in \mathbb{N} \setminus \{0\}$ such that $s \geq n$, let $Y_n^s$ be the random variable for the number of turns to finish the game rolling $n$ dice with $s$ faces.
    We model our problem in two different ways. 
    First, we focus on each die individually: we imagine rolling it separately at each turn until we achieve a success and remove it from the game. Note that, although the number of dice can change each turn, the probability of a die achieving a successful roll and being removed from the game is the same. This method results in the number of turns to finish the game being modeled as the maximum of a sequence of i.i.d. geometric random variables, which has been studied before in \cite{SR90, EISENBERG2008135, veatch2021analysis}. These papers include some of our results (namely Theorems \ref{thm:nonRecEV}, \ref{thm:NonRecVar}, \ref{thm:EVRecFormula}, \ref{thm:VarRecForm}).
    Then, we imagine rolling all the dice simultaneously, leading to some successes (i.e., dice that roll the correct number and can be removed from the game) and some failures. This interpretation naturally leads to a Markov-chain interpretation and recursive solutions. 
    For the sake of completeness, we also prove that the formulas found are mathematically equivalent in Appendix B. 
    Furthermore, we find that $n \leq s \leq \mathbb{E}(Y_n^s) \leq ns$ and $s^2-s \leq \mathrm{Var}(Y_n^s) \leq ns(s-1)$. We conclude that the expected value grows linearly in both the number of dice and the number of faces, while the variance grows quadratically in the number of faces and sublinearly in the number of dice. Although the problem of approximating these values had been treated before in \cite{SR90, EISENBERG2008135}, these papers focus on asymptotic results, which can be unreliable for small values of $n$ and $s$. \\

    In the second section, we write some initial definitions and derive the finite number of possible endings. In the third and fourth sections, we find two different formulations for the expected value and variance of the number of turns needed to finish the game. In particular, in the third section, we imagine that each die is rolled individually until a successful roll is achieved (which would remove it from the game). Thus, the problem is equivalent to studying the maximum of a sequence of i.i.d. geometric random variables, and we derive the non-recursive formulas in Theorems \ref{thm:nonRecEV} and \ref{thm:NonRecVar}. In the fourth section, we imagine that, at each turn, all of the dice are rolled simultaneously, leading to a Markov chain interpretation and the recursive formulas in Theorems \ref{thm:EVRecFormula} and \ref{thm:VarRecForm}. In the fifth section, we prove some bounds for both the expected value and variance of the game. \\

    We conclude by noting applications of the formulas and bounds we found. Obviously, these can be applied to any order statistics scenario involving a maximum of i.i.d. geometric random variables, such as networks (\cite{larisch2017crlite, avis2023analysis}), probabilistic data structures in informatics (\cite{benomar2024learning}). Another popular application is bioinformatics (\cite[sec. 5.4, 6.3]{article}), where it is sometimes needed to compare long strands of DNA by counting the length of nucleotide matches. In this setting, the results we prove can be used for P-value approximations. In fact, assuming randomness in the DNA composition, the problem of finding the first non-match between two strands can be modeled as a geometric distribution with parameter $p=0.75$. Therefore, comparing many different strands of DNA, under the null hypothesis of randomness, can be reduced to finding the maximum of IID geometric random variables. 
    
	\section{Initial Considerations} \label{s:2}
    To study the game, we first define formally what we mean by the term “turn”.
    \begin{definition} \label{def:turns}
        Given $k$ dice, each with $s$ faces, a turn is formally defined as a finite sequence of $k$ numbers, each between $1$ and $s$.
    \end{definition}
    We will, somewhat improperly, also refer to the act of throwing the $k$ dice as “turn”. On each turn, we will colloquially define a die (or a roll) as successful if it shows $k$ (and is therefore removed from the game). \\
    Now we define a game.
    \begin{definition} \label{def:game}
        Given $n$ dice with $s$ faces, a game is formally defined as a possibly infinite sequence of turns, each with $k \ \leq \ n$ dice with $s$ faces, where if we roll $k$ dice on a turn we remove all showing a $k$, and the game ends when there are no dice to roll. 
    \end{definition} 
    We must consider the case where there's an infinite number of turns because we can show that the game doesn't have to finish, though with probability 1 it will.
    \begin{example}
        Let us consider a game with 4 dice, each with 6 faces. In this case, we could have 4 dice forever, if at each throw we never get a 4. This shows that the game is not guaranteed to end.
    \end{example}

    \begin{definition} \label{def:finitegame}
        Given a game with $n$ dice, each with $s$ faces, it is said to be finite if it's a finite sequence of turns. In that case, we define the signature of a game as a sequence of $n$ numbers, one for every different die, each being the last number rolled when the die was removed from the game.
    \end{definition}

    \begin{example}
        Let us consider Example \ref{ex:4,6}. The signature of that game is $4331$.
    \end{example}
    
    \begin{example}
        Let us consider the previous problem again. We observe that the only possible signatures are:
        \[4444, 4441, 4422, 4421, 4333, 4331, 4322, 4321.\]
    \end{example}
    
    \begin{proposition} \label{prop:GameEndings}
        Given a finite game with $n$ dice, each with $s$ faces, there are $2^{n-1}$ possible signatures.
    \end{proposition}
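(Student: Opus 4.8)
The plan is to set up a bijection between the realizable signatures and the subsets of $\{1,2,\dots,n-1\}$, the latter set having exactly $2^{n-1}$ elements.

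First I would record the structural fact about how the signature entries arise. A die is removed on a turn precisely when the number $k$ of dice currently in play equals the face it shows, and the value it contributes to the signature is exactly this $k$. Since the number of dice in play never increases and strictly decreases whenever a removal occurs, the \emph{distinct} counts at which removals happen form a strictly decreasing sequence
\[
n = c_0 > c_1 > \dots > c_m = 0,
\]
with $c_0 = n$ because play begins with $n$ dice (the count stays at $n$ until the first removal, which forces the largest signature entry to be $n$) and $c_m = 0$ because a finite game terminates with no dice left. At the turn(s) with $c_i$ dice in play, exactly $c_i - c_{i+1}$ dice are removed, each contributing the value $c_i$. Hence the signature, read in non-increasing order, consists of $c_i - c_{i+1}$ copies of $c_i$ for $i = 0,\dots,m-1$, so it is completely determined by the decreasing sequence $(c_i)$, and conversely the sequence is recovered as the distinct signature values together with $0$.

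Next I would make the count explicit. The endpoints $c_0 = n$ and $c_m = 0$ are forced, so the sequence $(c_i)$ is determined by its intermediate terms $c_1 > \dots > c_{m-1}$, which form an arbitrary subset $S = \{c_1,\dots,c_{m-1}\} \subseteq \{1,2,\dots,n-1\}$. The assignment signature $\mapsto S$ is the desired bijection: distinct decreasing sequences yield distinct subsets and vice versa, and by the previous paragraph distinct sequences correspond to distinct signatures. Counting subsets of an $(n-1)$-element set then gives $2^{n-1}$.

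The point requiring care — and the main obstacle — is \emph{realizability}: I must verify that every subset $S$ actually arises from some finite game, not merely that every game produces such an $S$. Given $S$, I would exhibit a game in which, at the turn with $c_i$ dice in play, exactly $c_i - c_{i+1}$ of them show $c_i$ and the rest show a different value. This is possible because $1 \le c_i - c_{i+1} \le c_i$ and, crucially, because the hypothesis $s \ge n$ guarantees each relevant face value $c_i \le n \le s$ is attainable on an $s$-sided die; each such configuration has positive probability, so the game is genuinely realizable and finite. Combining realizability with the bijection completes the count of $2^{n-1}$ possible signatures.
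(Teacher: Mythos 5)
Your proof is correct, but it follows a genuinely different route from the paper's. The paper proceeds by induction on $n$: every outcome either has all $n$ dice removed at once (all showing $n$), or consists of $i$ dice showing $n$ for some $1 \leq i \leq n-1$ followed by an outcome of a game with $n-i$ dice, giving the count $1 + \sum_{i=1}^{n-1} 2^{i-1} = 2^{n-1}$. You instead build an explicit bijection: a signature determines, and is determined by, the strictly decreasing sequence of dice-counts at which removals occur, hence by its set of intermediate terms, which is an arbitrary subset of $\{1,\dots,n-1\}$. The two arguments are close relatives --- the paper's inductive decomposition is precisely the first step of your sequence $(c_i)$, applied recursively --- but yours delivers more: a concrete structural description of which strings actually occur as signatures (the value $c_i$ appears with multiplicity $c_i - c_{i+1}$, matching the list $4444, 4441, \dots, 4321$ in the paper's example), and an explicit verification of realizability, i.e., that every subset of $\{1,\dots,n-1\}$ is attained by some finite game, which is exactly where the hypothesis $s \geq n$ enters; the paper's induction leaves both of these points implicit. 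What the paper's version buys in exchange is brevity and a shape that mirrors the recursive structure of the game. One cosmetic slip in your write-up: $c_m = 0$ is the terminal count, not a ``count at which a removal happens''; since you only ever use $c_0,\dots,c_{m-1}$ as removal counts, nothing breaks.
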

    
    \begin{proof}
        We proceed by induction. With a single die, the game ends only if a 1 is rolled. Thus, there is exactly one possible outcome and the base case holds. \medskip

        Assume that for all integers $k$ such that $1 < k < n$, the number of possible outcomes with $k$ dice is $2^{k-1}$.
        Now consider the case with $n$ dice. Each outcome falls into one of the following categories.
        \begin{itemize}
          \item All $n$ dice show $n$. This accounts for one outcome.
           \item For each $1 \leq i \leq n - 1$, the first $i$ dice show $n$, followed by an outcome of a game with $n - i$ dice.
        \end{itemize}
        Therefore, the total number of outcomes is
        \begin{equation*}
            1 + \sum_{i=1}^{n-1} 2^{i-1}\ = \ 2^{n-1}.
        \end{equation*}
    \end{proof}

    \section{Subsequent Throws} \label{s:4}
	
	\subsection{Formalization}\label{ss:4.1}
	Now we imagine that each die is rolled singly until achieving a successful roll (which would remove it from the game). 
    The probability that a die with $s$ faces, considered individually, is eliminated at each turn is constant and equal to $p=1/s$. This probability is independent of the outcomes of the other dice or the number of dice being rolled (since the faces are equiprobable). Thus, if we define $Z_i^s$ for $i=1,2,\dots,n$ the random variable for the turn in which die $i$ with $s$ faces is removed from the game,
	we observe that each ${Z_i^s}$ follows a geometric distribution with parameter $p$.
	Therefore, if, as before, we define $Y_n^s$ as the random variable for the number of turns to finish the game rolling $n$ dice with $s$ faces, we note that $Y_n^{s}$ corresponds to the turn at which the last die is eliminated, and thus
	\begin{equation*}
		Y_n^{s} \ = \  \max_i {Z_i^s}.
	\end{equation*}

    \begin{proposition} \label{NonRecDistrib}
        Assume $s\geq n>0$, $\ {Z_i^s}\overset{\mathrm{iid}}{\sim}\mathrm{Geom}(1/s)$, $\ Y_n^{s}=\max_{i\leq n}{Z_i^s}$, and $q=1-p$. Then

        \begin{equation}\label{eq:nonRecDistrib} 
            \mathbb{P}(Y_n^{s}=y) \ = \ \sum_{k=1}^n \binom{n}{k}(-1)^{k+1}q^{yk}\left(\frac{1}{q^k}-1\right).
        \end{equation}
    \end{proposition}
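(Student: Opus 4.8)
The plan is to exploit the representation $Y_n^s = \max_{i\leq n} Z_i^s$ together with independence to pass from the pointwise probabilities to the cumulative distribution function, and then recover the pointwise mass by taking a first difference. First I would compute the CDF of a single die: since $Z_i^s \sim \mathrm{Geom}(1/s)$ is supported on the positive integers, the probability that die $i$ has been removed by turn $y$ is $\mathbb{P}(Z_i^s \leq y) = 1 - q^y$. Because the $Z_i^s$ are i.i.d., the event $\{Y_n^s \leq y\}$ is exactly the intersection of the $n$ independent events $\{Z_i^s \leq y\}$, so $\mathbb{P}(Y_n^s \leq y) = (1-q^y)^n$.

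Next, I would write the pointwise probability as a difference of consecutive CDF values,
\begin{equation*}
\mathbb{P}(Y_n^s = y) \ = \ \mathbb{P}(Y_n^s \leq y) - \mathbb{P}(Y_n^s \leq y-1) \ = \ (1-q^y)^n - (1-q^{y-1})^n,
\end{equation*}
and expand each $n$-th power via the binomial theorem. The two expansions share the same binomial coefficients and signs, differing only in the exponent of $q$, so subtracting them leaves
\begin{equation*}
\sum_{k=0}^n \binom{n}{k}(-1)^k\left(q^{yk} - q^{(y-1)k}\right).
\end{equation*}
The $k=0$ summand is $q^0 - q^0 = 0$ and vanishes, which lets me start the sum at $k=1$.

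Finally I would simplify the surviving terms: factoring yields $q^{yk} - q^{(y-1)k} = -q^{yk}\left(q^{-k} - 1\right) = -q^{yk}\left(\frac{1}{q^k} - 1\right)$, and absorbing the extra minus sign into $(-1)^k$ converts it to $(-1)^{k+1}$, reproducing \eqref{eq:nonRecDistrib} exactly. I do not anticipate a genuine obstacle, as the argument is entirely elementary; the only step requiring care is the bookkeeping at the end, namely tracking the sign through the factorization and confirming the exponent identity $q^{(y-1)k} = q^{yk}/q^k$ so that the factored form matches the stated $\left(\frac{1}{q^k} - 1\right)$.
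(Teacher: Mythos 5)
Your proposal is correct and follows essentially the same route as the paper: compute the CDF of the maximum as $F_{Z_i^s}(y)^n = (1-q^y)^n$ (the paper cites the standard order-statistics fact, you derive it directly from independence, which amounts to the same thing), take the first difference $F(y)-F(y-1)$, expand both powers by the binomial theorem, drop the vanishing $k=0$ term, and factor out $q^{yk}$ with the sign bookkeeping you describe. No gaps; nothing further needed.
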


        \begin{proof}
        For $y=1,2,\dots$, we calculate the cumulative distribution function using some well-known results in order statistics \cite[p.229]{casella2002statistical}:
	\begin{equation} \label{eq:nonrecCumdistrib}
		F_{Y_n^{s}}(y)  \ = \ F_{\max_i {Z_i^s}}(y) \ = \ F_{{Z_i^s}}(y)^n \ = \
		\begin{cases}
			0 &y<1;\\
			\left(1-(1-p)^{y}\right)^n \qquad &y\geq 1.
		\end{cases}
	\end{equation}
	From this, we can also calculate the probability density,
	\begin{align*} 
		\mathbb{P}(Y_n^{s}=y) \ &= \ F_{Y_n^{s}}(y)-F_{Y_n^{s}}(y-1)\\ 
		\ &= \ \left(1-(1-p)^{y}\right)^n-\left(1-(1-p)^{y-1}\right)^n\\ 
		\ &= \ \sum_{k=0}^{n} \binom{n}{k} (-1)^{k}(1-p)^{yk} - \sum_{k=0}^n \binom{n}{k}(-1)^{k}(1-p)^{(y-1)k}\\
		\ &= \ \sum_{k=0}^n \binom{n}{k}(-1)^{k}\left((1-p)^{yk}-(1-p)^{(y-1)k}\right).
    \end{align*}
    Then, if we let $q=1-p$ the above becomes
    \begin{align*} 
        \mathbb{P}(Y_n^{s}=y) \ &= \ \sum_{k=1}^n \binom{n}{k}(-1)^kq^{yk}\left(1-\frac{1}{q^k}\right)\\
		\ &= \ \sum_{k=1}^n \binom{n}{k}(-1)^{k+1}q^{yk}\left(\frac{1}{q^k}-1\right).
	\end{align*}
    \end{proof}
	
	\subsection{Expected Value}\label{ss:4.2}
	Now we can use Equation \eqref{eq:nonRecDistrib} to find a new equation for $\mathbb{E}(Y_n^s)$.

    \begin{theorem} \label{thm:nonRecEV}
        Assume $s\geq n>0$, $\ {Z_i^s}\overset{\mathrm{iid}}{\sim}\mathrm{Geom}(1/s)$, $\ Y_n^{s}=\max_{i\leq n}{Z_i^s}$, and $q=1-p$. Then
        \begin{equation}\label{eq:EvnonRec}
            \mathbb{E}(Y_n^{s}) \ = \ \sum_{k=1}^n\binom{n}{k}(-1)^{k+1}\frac{1}{1-q^k}.  
        \end{equation}
    \end{theorem}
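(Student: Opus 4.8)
The plan is to compute $\mathbb{E}(Y_n^s)=\sum_{y=1}^\infty y\,\mathbb{P}(Y_n^s=y)$ directly from the mass function \eqref{eq:nonRecDistrib}, as the sentence preceding the theorem invites. Substituting that expression gives
\[
\mathbb{E}(Y_n^s)\;=\;\sum_{y=1}^\infty y\sum_{k=1}^n\binom{n}{k}(-1)^{k+1}\Bigl(\tfrac{1}{q^k}-1\Bigr)q^{yk}.
\]
The first move is to interchange the two summations so that the sum over $y$ sits inside. This step looks like it should require a Fubini--Tonelli justification, but in fact it is the cheapest part of the argument: the sum over $k$ is \emph{finite} ($1\le k\le n$), so we are merely reordering a finite linear combination of infinite series, each of which converges because $0<q<1$ forces $q^k<1$. (I would assume $0<q<1$, i.e.\ $s\ge 2$; the degenerate case $s=1$ gives $q=0$ and $Y_n^s\equiv 1$, matching the formula via $\sum_{k=1}^n\binom nk(-1)^{k+1}=1$, and is best checked separately or through the tail-sum route below.)

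After interchanging, each summand reduces to evaluating $\sum_{y=1}^\infty y\,(q^k)^y$. Here I would invoke the standard power-series identity $\sum_{y\ge 1}yx^y=x/(1-x)^2$ for $|x|<1$, applied with $x=q^k$, to get $q^k/(1-q^k)^2$. Multiplying by the prefactor $\bigl(\tfrac{1}{q^k}-1\bigr)=\tfrac{1-q^k}{q^k}$ and cancelling the $q^k$ yields exactly $\tfrac{1}{1-q^k}$ for each $k$, so that summing over $k$ produces \eqref{eq:EvnonRec}. The only genuine (and minor) obstacle is the algebraic bookkeeping: correctly combining $\tfrac{1-q^k}{q^k}\cdot\tfrac{q^k}{(1-q^k)^2}$ and tracking the sign $(-1)^{k+1}$ through the cancellation.

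As a cross-check, and to avoid the $\sum_y y\,q^{yk}$ computation altogether, I would also note the cleaner tail-sum derivation built on the CDF \eqref{eq:nonrecCumdistrib}. Using $\mathbb{E}(Y_n^s)=\sum_{y\ge 0}\mathbb{P}(Y_n^s>y)$ together with $\mathbb{P}(Y_n^s>y)=1-(1-q^y)^n=\sum_{k=1}^n\binom{n}{k}(-1)^{k+1}q^{yk}$ (valid for all $y\ge 0$ with the convention $q^0=1$), interchanging the finite $k$-sum with $\sum_{y\ge 0}$ and summing the plain geometric series $\sum_{y\ge 0}(q^k)^y=\tfrac{1}{1-q^k}$ gives \eqref{eq:EvnonRec} immediately. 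Either route works; I would present the first to honour the remark that we ``use Equation \eqref{eq:nonRecDistrib},'' and keep the second as a sanity check.
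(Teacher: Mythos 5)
Your proposal is correct and follows essentially the same route as the paper's own proof: substitute the mass function \eqref{eq:nonRecDistrib}, swap the finite $k$-sum with the $y$-series, evaluate $\sum_{y\geq 1} y\,q^{yk}=q^k/(1-q^k)^2$ (the paper cites Lemma \ref{lemma:series} for this), and cancel to get $1/(1-q^k)$. Your tail-sum cross-check and the explicit remark that the sum interchange is harmless because the $k$-sum is finite are nice additions, but the core argument is the same.
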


    \begin{proof}
      The expected value is
    \begin{align*} 
        \mathbb{E}(Y_n^{s}) \ &= \ \sum_{y=1}^{\infty}y \sum_{k=1}^n \binom{n}{k}(-1)^{k+1}q^{yk}\left(\frac{1}{q^k}-1\right)\\
        \ &= \ \sum_{k=1}^n\binom{n}{k}(-1)^{k+1}\left(\frac{1}{q^k}-1\right) \sum_{y=1}^{\infty}y \cdot q^{yk}.
    \end{align*}
    By Lemma \ref{lemma:series}, when $q^k<1$, we have $\sum_{y=1}^{\infty}y \cdot q^{yk}=\frac{q^k}{(1-q^k)^2}$. Therefore
    \begin{align*}
        \mathbb{E}(Y_n^{s}) \ &= \ \sum_{k=1}^n\binom{n}{k}(-1)^{k+1}\left(\frac{1}{q^k}-1\right)\frac{q^k}{(1-q^k)^2}\\
        \ &= \ \sum_{k=1}^n\binom{n}{k}(-1)^{k+1}\frac{1}{1-q^k}.
    \end{align*}  
    \end{proof}

    Note that, as a direct consequence of Theorem \ref{thm:nonRecEV}, we have $\mathbb{E}(Y_n^s)<\infty$.
	
    \subsection{Variance} \label{ss:4.3}
    Now we can use Equation \eqref{eq:nonRecDistrib} to find a new equation for $\mathrm{Var}(Y_n^s)$. 

    \begin{theorem} \label{thm:NonRecVar}
        Assume $s \geq n>0$, $\ {Z_i^s}\overset{\mathrm{iid}}{\sim}\mathrm{Geom}(1/s)$, $\ Y_n^{s}=\max_{i\leq n}{Z_i^s}$, and $q=1-p$. Then
        \begin{equation} \label{eq:VarnonRec}
            \mathrm{Var}(Y_n^{s}) \ = \ \sum_{k=1}^n\binom{n}{k}(-1)^{k+1}\frac{1+q^k}{(1-q^k)^2} - \mathbb{E}(Y_n^{s})^2.   
        \end{equation}
    \end{theorem}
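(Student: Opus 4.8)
The plan is to reduce everything to a second-moment computation via the identity $\mathrm{Var}(Y_n^s)=\mathbb{E}\big((Y_n^s)^2\big)-\mathbb{E}(Y_n^s)^2$. Since the first moment is already given in closed form by Theorem~\ref{thm:nonRecEV}, the entire content of the statement lies in evaluating the second moment $\mathbb{E}\big((Y_n^s)^2\big)$ and checking that, after subtracting $\mathbb{E}(Y_n^s)^2$, the first term matches the claimed sum.

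First I would insert the mass function from Proposition~\ref{NonRecDistrib} into the definition of the second moment, writing
\[
\mathbb{E}\big((Y_n^s)^2\big) \ = \ \sum_{y=1}^{\infty} y^2 \sum_{k=1}^{n}\binom{n}{k}(-1)^{k+1}q^{yk}\left(\frac{1}{q^k}-1\right).
\]
Because the inner sum over $k$ is finite, I can freely interchange it with the sum over $y$ (no delicate convergence argument is needed beyond convergence of each resulting series in $y$), obtaining
\[
\mathbb{E}\big((Y_n^s)^2\big) \ = \ \sum_{k=1}^{n}\binom{n}{k}(-1)^{k+1}\left(\frac{1}{q^k}-1\right)\sum_{y=1}^{\infty} y^2 q^{yk}.
\]

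Next I would evaluate the inner power series. Mirroring the role played by Lemma~\ref{lemma:series} in the proof of Theorem~\ref{thm:nonRecEV}, I would use the identity $\sum_{y=1}^{\infty} y^2 x^{y}=x(1+x)/(1-x)^3$, valid for $|x|<1$, with $x=q^k$; here $q=1-1/s\in[0,1)$ guarantees $q^k<1$, so every series converges and every denominator $1-q^k$ is nonzero. This gives $\sum_{y\geq 1} y^2 q^{yk}=q^k(1+q^k)/(1-q^k)^3$. Substituting and simplifying the prefactor via $\left(\tfrac{1}{q^k}-1\right)q^k=1-q^k$ cancels one factor of $(1-q^k)$ from the denominator and leaves
\[
\mathbb{E}\big((Y_n^s)^2\big) \ = \ \sum_{k=1}^{n}\binom{n}{k}(-1)^{k+1}\frac{1+q^k}{(1-q^k)^2}.
\]
Subtracting $\mathbb{E}(Y_n^s)^2$ then yields exactly \eqref{eq:VarnonRec}.

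The computation is largely routine bookkeeping rather than a deep argument; the only steps that require genuine care are the power-series identity for $\sum y^2 x^y$ (which I would either cite or derive by differentiating the geometric series twice, as the natural companion to Lemma~\ref{lemma:series}) and the algebraic cancellation that collapses $(1-q^k)^3$ down to $(1-q^k)^2$. If there is any obstacle at all, it is keeping the prefactor $\left(\tfrac{1}{q^k}-1\right)$ and the series value grouped as the product $(1-q^k)\cdot q^k(1+q^k)/(1-q^k)^3$ so that the manipulation stays well defined, rather than a conceptual difficulty.
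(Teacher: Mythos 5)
Your proposal is correct and follows essentially the same route as the paper's own proof: insert the mass function from Proposition~\ref{NonRecDistrib}, swap the finite sum over $k$ with the sum over $y$, evaluate $\sum_{y\geq 1} y^2 q^{yk}$ via the series identity (the paper's Lemma~\ref{lemma:series}), simplify the prefactor to collapse $(1-q^k)^3$ to $(1-q^k)^2$, and finish with $\mathrm{Var}(Y_n^s)=\mathbb{E}\bigl((Y_n^s)^2\bigr)-\mathbb{E}(Y_n^s)^2$. No gaps; your explicit remark that the interchange is harmless because the $k$-sum is finite is a point the paper leaves implicit.
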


    \begin{proof}
    The second moment is
    \begin{align*}
        \mathbb{E}\left(\left({Y_n^{s}}\right)^2\right) \ &= \ \sum_{y=1}^{\infty}y^2 \sum_{k=1}^n \binom{n}{k}(-1)^{k+1}q^{yk}\left(\frac{1}{q^k}-1\right)\nonumber\\
        \ &= \ \sum_{k=1}^n\binom{n}{k}(-1)^{k+1}\left(\frac{1}{q^k}-1\right) \sum_{y=1}^{\infty}y^2 \cdot q^{yk}\nonumber
    \end{align*}
    By Lemma \ref{lemma:series}, when $q^k<1$, we have $\sum_{y=1}^{\infty}y^2 \cdot q^{yk}=\frac{q^k(1+q^k)}{(1-q^k)^3}$. Therefore:
    \begin{align}\label{eq:EV2NonRec}
        \mathbb{E}\left(\left({Y_n^{s}}\right)^2\right)\ &= \ \sum_{k=1}^n\binom{n}{k}(-1)^{k+1}\left(\frac{1}{q^k}-1\right)\frac{q^k(1+q^k)}{(1-q^k)^3}\nonumber\\
        \ &= \ \sum_{k=1}^n\binom{n}{k}(-1)^{k+1}\frac{1+q^k}{(1-q^k)^2}.
    \end{align} 
    The variance can then be found using the well-known formula $\mathrm{Var}(Y_n^{s})=\mathbb{E}\left(\left({Y_n^{s}}\right)^2\right)-\mathbb{E}(Y_n^{s})^2$.
    \end{proof}

    Note that, as a direct consequence of Theorem \ref{thm:NonRecVar}, we have $\mathrm{Var}(Y_n^s)<\infty$.
    
    
	\section{Markovian Approach}  \label{s:3}
	
	\subsection{Formalization}\label{ss:3.1}
	We consider all the dice to be rolled simultaneously. Our system is described by the number of dice present at turn $t$. Given an initial number $n$ of dice (with $s$ faces each), during the game we can only have $m \in \mathbb{N}$ dice with $0 \leq m \leq n$. Hence, we can define the state space $S = \{0, 1, \dots , n\}$ and the stochastic process $(X_t^s)_{t \geq 0}$, where $X^s_t\in S$ denotes the number of dice at turn $t$, with initial condition $X_0^s=n$. We observe that, given a state $X^s_t$, the distribution of $X^s_{t+1}$ depends exclusively on $X^s_t$. Therefore, our process is a discrete-time Markov chain.\\Since each die is rolled independently and each roll results either in a success (die is eliminated) or a failure (die remains), the number of successful rolls at a turn with $k$ dice (having $s$ faces each) follows a binomial distribution
    \[
    {B^{s}_{k}} \ \sim \ \text{Bin}\left(k, p\right), \quad\text{with}\quad p=1/s.
    \]
    We now define the transition probabilities. 
    \begin{align} \label{eq:transProb} 
    \notag
        P_{i, j} \ = \ \mathbb{P}(X^s_{t+1} = j \ | \ X^s_t = i) \ &= \ \begin{cases}
            \mathbb{P}(B_i^s = i-j) \quad &\text{if } 0 \leq j \leq i;\\ 
            0 \quad &\text{otherwise}.\\ 
        \end{cases} 
        \\ \ &= \ \begin{cases}
            \binom{i}{j}p^{i-j}(1-p)^j \quad &\text{if } 0 \leq j \leq i;\\ 
            0 \quad &\text{otherwise}.\\ 
        \end{cases}
    \end{align}
    Note that $X^s_t=0$ is the only absorbing state of the process and corresponds to the end of the game. In fact, due to Equation \eqref{eq:transProb}, $P_{0, y} = 0, \ \forall y \in S\setminus \{0\}$. Therefore, to study the duration of the game, we consider the absorption time
    \[
    T_n^s \ = \ \inf \{ t \geq 0 : \ X^s_t = 0 \ | \ X^s_0 = n \}.
    \]
    
    \subsection{Expected Value}\label{ss:3.2}

    \begin{theorem}\label{thm:EVRecFormula}
        Assume $s\geq n>0$, $p=1/s$. Then,
    \begin{equation} \label{eq:EV_recurrence}
    \mathbb{E}(T_n^{s}) \ = \ \frac{1+ \sum_{k=0}^{n-1} \binom{n}{k}p^{n-k}(1-p)^k \mathbb{E}(T_k^s)}{1-(1-p)^n}.
    \end{equation}
    \end{theorem}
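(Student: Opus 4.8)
The plan is to use first-step analysis on the Markov chain $(X_t^s)_{t\ge 0}$, conditioning on the outcome of the very first turn. Starting from $X_0^s = n$, one turn is consumed and the chain moves to some state $j$ with probability $P_{n,j}$ given by Equation \eqref{eq:transProb}. By the Markov property, the number of additional turns needed after reaching state $j$ is distributed exactly as $T_j^s$, so
\begin{equation*}
\mathbb{E}(T_n^s) \ = \ \sum_{j=0}^{n} P_{n,j}\bigl(1 + \mathbb{E}(T_j^s)\bigr) \ = \ 1 + \sum_{j=0}^{n} P_{n,j}\,\mathbb{E}(T_j^s),
\end{equation*}
where the second equality uses $\sum_{j=0}^n P_{n,j} = 1$.

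Next I would simplify the sum. Since $X_t^s = 0$ is the absorbing state corresponding to the end of the game, $\mathbb{E}(T_0^s) = 0$, so the $j=0$ term vanishes and may be kept in or dropped from the sum freely. The essential manipulation is to isolate the self-loop term $j = n$, whose probability is $P_{n,n} = \binom{n}{n} p^{0} (1-p)^n = (1-p)^n$. Renaming the remaining index $k = j$ for $0 \le k \le n-1$ and substituting the binomial transition probabilities gives
\begin{equation*}
\mathbb{E}(T_n^s) \ = \ 1 + (1-p)^n\,\mathbb{E}(T_n^s) + \sum_{k=0}^{n-1} \binom{n}{k} p^{n-k}(1-p)^k\,\mathbb{E}(T_k^s).
\end{equation*}
Collecting the two $\mathbb{E}(T_n^s)$ terms on the left and dividing by $1 - (1-p)^n$, which is nonzero because $0 < p \le 1$ forces $(1-p)^n < 1$, yields Equation \eqref{eq:EV_recurrence}.

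The only genuine subtlety is justifying that the rearrangement in the last step is legitimate, i.e.\ that $\mathbb{E}(T_n^s) < \infty$ so it can be moved across the equation. I would settle this at the outset: the chain $(X_t^s)$ records the number of surviving dice, and each surviving die is removed on any given turn independently with probability $p$, so $T_n^s$ coincides in distribution with $Y_n^s = \max_{i\le n} Z_i^s$ from Section \ref{ss:4.1}. Hence $\mathbb{E}(T_n^s) = \mathbb{E}(Y_n^s)$, which is finite by Theorem \ref{thm:nonRecEV}. With finiteness in hand, every line above is an identity between finite quantities. I expect no real obstacle beyond this bookkeeping; the argument is a standard absorbing-chain computation, and the main care needed is in isolating the self-loop term correctly so that the denominator $1 - (1-p)^n$ appears.
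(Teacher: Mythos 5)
Your proof is correct and follows essentially the same route as the paper's: first-step analysis conditioning on the state reached after one turn, use of the Markov property (time-homogeneity) to identify the remaining time with $T_k^s$, isolation of the self-loop term $P_{n,n}=(1-p)^n$, and solving for $\mathbb{E}(T_n^s)$. The only difference is that you explicitly justify the finiteness of $\mathbb{E}(T_n^s)$ (via the identification of $T_n^s$ with $Y_n^s$ and Theorem \ref{thm:nonRecEV}) before moving the self-loop term across the equation, a bookkeeping point the paper's proof passes over silently.
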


    \begin{proof}
    Let $\,\tau = \inf \{ m \geq 0 : \ X^s_m = 0 \}$. We want to compute $\mathbb{E}(T_n^s) =  \mathbb{E}(\tau \ | \ X^s_0 = n)$.\\ 
    We can write $\tau = 1 + \tau'$, where $\tau '$ is the remaining time to absorption after time $1$.
    \begin{equation*}
    \mathbb{E}(T_n^{s}) \ = \ \mathbb{E}(\tau \ | X^s_0 = n) = 1 + \mathbb{E}(\tau ' \ | X^s_0 = n). 
    \end{equation*}
    Now we condition on $X^s_1$ to obtain
    \begin{equation*}
        \mathbb{E}(T_n^{s}) \ = \ 1 + \sum_{k=0}^n \mathbb{E}(\tau' \ | \ X^s_1 = k) \,\mathbb{P}(X^s_1 = k \ | \ X^s_0 = n) \ =  \ 1 + \sum_{k=0}^n P_{n,k}\, \mathbb{E}(\tau' \ | \ X^s_1 = k). 
    \end{equation*}
    Since our Markov chain is time-homogeneous, we have $\mathbb{E}(\tau' \ | \ X^s_1 = k) = \mathbb{E}(\tau \ | \ X^s_0 = k)$ for any $k\ge0$ (\cite[p.446]{Cinlar_2011}). Therefore,
    \begin{align*}
    \mathbb{E}(T_n^{s}) \ = \ 1 + \sum_{k=0}^n P_{n,k}\, \mathbb{E}(T_k^s) \ = \ 1 + P_{n,n}\, \mathbb{E}(T_n^s) + \sum_{k=0}^{n-1} P_{n,k}\, \mathbb{E}(T_k^s) .
    \end{align*}
    From this, we obtain the formula
    \begin{equation*} 
    \mathbb{E}(T_n^{s})\ = \ \frac{\sum_{k=0}^{n-1} P_{n,k} \,\mathbb{E}(T_k^s) +1}{1-P_{n,n}}.
    \end{equation*}
    Lastly, we can use Equation \eqref{eq:transProb} to find
    \begin{equation*}
    \mathbb{E}(T_n^{s})\ = \ \frac{1+ \sum_{k=0}^{n-1} \binom{n}{k}p^{n-k}(1-p)^k \mathbb{E}(T_k^s)}{1-(1-p)^n}.
    \end{equation*}
    \end{proof}
    
\subsection{Variance} \label{ss:3.3}
We compute $\mathrm{Var}(T_n^s)$ using a strategy similar to the one used in Theorem \ref{thm:EVRecFormula}.

    \begin{theorem} \label{thm:VarRecForm}
        Assume $s \geq n>0$, $p=1/s$. Then
        \begin{equation}
        \mathrm{Var}(T_n^{s}) \ = \ \frac{\sum_{k=0}^{n-1} \binom{n}{k}p^{n-k}(1-p)^k\,\mathbb{E}((T_k^s)^2)-1 + 2\mathbb{E}(T_n^s)}{1-(1-p)^n} - \mathbb{E}(T_n^s)^2.
        \end{equation}
    \end{theorem}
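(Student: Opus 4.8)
The plan is to mirror the first-step analysis used for Theorem \ref{thm:EVRecFormula}, but applied to the second moment rather than the mean, and then to recover the variance via the identity $\mathrm{Var}(T_n^s) = \mathbb{E}((T_n^s)^2) - \mathbb{E}(T_n^s)^2$. As before, let $\tau = \inf\{m \geq 0 : X_m^s = 0\}$ and write $\tau = 1 + \tau'$, where $\tau'$ is the remaining absorption time after the first turn. First I would square this decomposition to obtain $\tau^2 = 1 + 2\tau' + (\tau')^2$ and take conditional expectations given $X_0^s = n$, using linearity to treat the three terms separately.

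The constant term contributes $1$. For the cross term, I would invoke the already-established expected value: since $\mathbb{E}(\tau \mid X_0^s = n) = \mathbb{E}(T_n^s)$ and $\tau = 1 + \tau'$, linearity gives $\mathbb{E}(\tau' \mid X_0^s = n) = \mathbb{E}(T_n^s) - 1$, so the cross term contributes $2(\mathbb{E}(T_n^s) - 1)$. For the square term, I would condition on $X_1^s$ exactly as in the mean computation and appeal to time-homogeneity to replace $\mathbb{E}((\tau')^2 \mid X_1^s = k)$ with $\mathbb{E}((T_k^s)^2)$, yielding $\sum_{k=0}^n P_{n,k}\,\mathbb{E}((T_k^s)^2)$. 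Collecting the three pieces produces
\begin{equation*}
\mathbb{E}((T_n^s)^2) \ = \ -1 + 2\mathbb{E}(T_n^s) + \sum_{k=0}^n P_{n,k}\,\mathbb{E}((T_k^s)^2).
\end{equation*}

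I would then isolate the self-loop term $k = n$, move $P_{n,n}\,\mathbb{E}((T_n^s)^2)$ to the left-hand side, and divide by $1 - P_{n,n} = 1 - (1-p)^n$, which is strictly positive for $p \in (0,1]$. Substituting $P_{n,k} = \binom{n}{k}p^{n-k}(1-p)^k$ gives the recursive expression for the second moment $\mathbb{E}((T_n^s)^2)$, and subtracting $\mathbb{E}(T_n^s)^2$ yields the claimed formula. The one point requiring genuine care, and the likely main obstacle, is integrability: splitting the conditional expectation over $X_1^s$ and rearranging the resulting series are only legitimate once we know that $\mathbb{E}((T_k^s)^2) < \infty$ for every $k \leq n$. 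This is secured by observing that $T_n^s$ and $Y_n^s$ share the same distribution, so Theorem \ref{thm:NonRecVar} guarantees the second moments are finite; with that justification in hand, the remainder of the argument is a routine extension of the mean recursion.
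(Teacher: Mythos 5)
Your proof is correct and follows essentially the same route as the paper's: the same first-step decomposition $\tau = 1 + \tau'$, squaring, conditioning on $X_1^s$ with time-homogeneity, isolating the self-loop term $P_{n,n}\,\mathbb{E}((T_n^s)^2)$, and dividing by $1-(1-p)^n$. Your two small departures --- obtaining the cross term $2(\mathbb{E}(T_n^s)-1)$ directly by linearity rather than by re-conditioning on $X_1^s$ and invoking the mean recursion, and explicitly securing finiteness of the second moments via Theorem \ref{thm:NonRecVar} --- are minor refinements of the same argument, the latter addressing an integrability point the paper leaves implicit.
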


    \begin{proof}
    We define $\tau$ and $\tau'$ as in the proof of Theorem \ref{thm:EVRecFormula}, with $\tau = 1 + \tau'$.\\We want to find $\mathbb{E}((T_n^s)^2) =  \mathbb{E}(\tau^2 \ | \ X^s_0 = n)$. We have
    \begin{align*}
        \mathbb{E}((T_n^s)^2) \ &= \ \mathbb{E}(\tau ^ 2 \ | \ X^s_0 = n) \\
        \ &= \ 1 + 2\mathbb{E}(\tau' \ | \ X^s_0 = n) + \mathbb{E}((\tau')^2 \ | X^s_0 = n) \\
        \ &= \ 1+ 2\sum_{k=0}^n P_{n,k}\,\mathbb{E}(\tau' \ | \ X^s_1 = k) + \sum_{k=0}^n P_{n,k}\,\mathbb{E}((\tau') ^2 \ | \ X^s_1 = k) \\
        \ &= \ 1+ 2\sum_{k=0}^n P_{n,k}\,\mathbb{E}(T_k^s) + \sum_{k=0}^n P_{n,k}\,\mathbb{E}((T_k^s)^2).
    \end{align*}
    As seen in Theorem \ref{thm:EVRecFormula}, $\mathbb{E}(T_n^{s}) = 1 + \sum_{k=0}^n P_{n,k}\, \mathbb{E}(T_k^s)$, thus $\sum_{k=0}^n P_{n,k}\, \mathbb{E}(T_k^s) =  \mathbb{E}(T_n^{s}) - 1$. Therefore, we have
    \begin{align*}
        \mathbb{E}((T_n^s)^2) \ &= \ -1 + 2 \mathbb{E}(T_n^s) + \sum_{k=0}^{n-1} P_{n,k}\,\mathbb{E}((T_k^s)^2) + P_{n,n}\,\mathbb{E}((T_n^s)^2).
    \end{align*}
    We solve for $\mathbb{E}((T_n^s)^2)$ and obtain
    \begin{align*}
        \mathbb{E}((T_n^s)^2) \ &= \ \frac{\sum_{k=0}^{n-1} P_{n,k}\,\mathbb{E}((T_k^s)^2) + 2 \mathbb{E}(T_n^s) - 1}{1-P_{n,n}} \ = \ \frac{\sum_{k=0}^{n-1} \binom{n}{k}p^{n-k}(1-p)^k\,\mathbb{E}((T_k^s)^2)-1 + 2\mathbb{E}(T_n^s)}{1-(1-p)^n}.
    \end{align*}
	We can find the variance using the well-known formula $\mathrm{Var}(T_n^{s})=\mathbb{E}(\left({T_n^{s}}\right)^2)-\mathbb{E}\left(T_n^{s}\right)^2$.\\ 
    \end{proof}


\section{Bounds} \label{s:6}
We establish lower and upper bounds for both the expected value and the variance, using some elementary bounds (Propositions \ref{prop:EVBound1}, \ref{prop:VarBound1}) as a baseline to determine their effectiveness. 

\subsection{Expected Value} \label{ss:6.1}
We approach the problem as we did in Section \ref{s:4}, considering the throws subsequently. We are interested in bounding the value $\mathbb{E}(Y_n^{s})$, where $Y_n^{s}=\max_i{Z_i^s}$, with ${Z_i^s}$ geometrically distributed i.i.d. random variables. The elementary bound we present in Proposition \ref{prop:EVBound1} and  can be further improved by using the exact value of some $\mathbb{E}(Y_n^{s})$ for small values of $n$.

\begin{proposition}\label{prop:EVBound2}
Assume $s \ \geq \ n \ > \ 0$, $\ {Z_i^s}\overset{\mathrm{iid}}{\sim}\mathrm{Geom}(1/s)$, $\ Y_n^{s}=\max_{i\leq n}{Z_i^s}$. Then
\begin{enumerate}
    \item if $n$ is even, $\mathbb{E}(Y_n^{s}) \ \leq \ \frac{n}{2} \frac{3s^2-2s}{2s-1}$, and
    \item if $n$ is odd,  $\mathbb{E}(Y_n^{s}) \ \leq \ \frac{n}{2} \frac{3s^2-2s}{2s-1} + s$.

\end{enumerate}

\end{proposition}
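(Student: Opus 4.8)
The plan is to combine a simple pointwise inequality for maxima with the exact value of $\mathbb{E}(Y_2^s)$, which I can read off from Theorem \ref{thm:nonRecEV}. The starting observation is that every $Z_i^s$ is geometric on $\{1,2,\dots\}$, so $Z_i^s \ge 1 > 0$ almost surely. Consequently, for any partition of the index set $\{1,\dots,n\}$ into disjoint groups $G_1,\dots,G_m$,
\[
Y_n^s \ = \ \max_{i \le n} Z_i^s \ = \ \max_{1 \le j \le m}\Bigl(\max_{i \in G_j} Z_i^s\Bigr) \ \le \ \sum_{j=1}^{m} \max_{i \in G_j} Z_i^s,
\]
since the global maximum equals the largest of the group maxima, and a single nonnegative term never exceeds a sum of nonnegative terms. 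Taking expectations and using that the $Z_i^s$ are i.i.d.\ (so that the maximum over any group of size $r$ has the same law as $Y_r^s$) reduces everything to bounding sums of $\mathbb{E}(Y_r^s)$ for small $r$.

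First I would record the two exact means the argument needs. With $p = 1/s$ and $q = 1-p$, Theorem \ref{thm:nonRecEV} at $n=2$ collapses to
\[
\mathbb{E}(Y_2^s) \ = \ \frac{2}{1-q} - \frac{1}{1-q^2} \ = \ \frac{1}{p}\cdot\frac{1+2q}{1+q} \ = \ \frac{3s^2-2s}{2s-1},
\]
and $\mathbb{E}(Y_1^s) = 1/p = s$ is just the mean of a single geometric variable. These are exactly the small-$n$ values alluded to in the discussion before the statement.

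Then I would split into the two parities. For even $n$, partition the dice into exactly $n/2$ disjoint pairs; applying the displayed inequality and taking expectations gives $\mathbb{E}(Y_n^s) \le \tfrac{n}{2}\,\mathbb{E}(Y_2^s) = \tfrac{n}{2}\cdot\tfrac{3s^2-2s}{2s-1}$, which is statement (1). For odd $n$, partition into $(n-1)/2$ pairs together with a single leftover die, so that
\[
\mathbb{E}(Y_n^s) \ \le \ \frac{n-1}{2}\,\mathbb{E}(Y_2^s) + \mathbb{E}(Y_1^s) \ = \ \frac{n-1}{2}\cdot\frac{3s^2-2s}{2s-1} + s.
\]
Since $\tfrac{3s^2-2s}{2s-1} > 0$ for $s \ge 1$ and $\tfrac{n-1}{2} < \tfrac{n}{2}$, this is at most $\tfrac{n}{2}\cdot\tfrac{3s^2-2s}{2s-1} + s$, giving statement (2).

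The computation is short and the only point requiring genuine care is the distributional reduction: I must justify that each group maximum is distributed as $Y_r^s$ (immediate from independence and identical distribution), and that the subadditivity step is valid precisely because the geometric support keeps every summand positive. I would also note in passing that the odd-case bound is deliberately loosened (replacing $\tfrac{n-1}{2}$ by $\tfrac{n}{2}$) to present a clean uniform statement; the sharper $\tfrac{n-1}{2}$ version, and indeed the even tighter $\tfrac{n+1}{2}\,\mathbb{E}(Y_2^s)$ obtained by adjoining a dummy die to reach an even count, follow from the identical argument, and all of these refine the elementary baseline of Proposition \ref{prop:EVBound1}.
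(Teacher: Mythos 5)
Your proof is correct and follows essentially the same route as the paper: compute $\mathbb{E}(Y_2^s)=\frac{3s^2-2s}{2s-1}$ from Theorem \ref{thm:nonRecEV}, bound the global maximum by the sum of pairwise maxima (valid since the $Z_i^s$ are positive), and use the i.i.d.\ property to identify each pair maximum with $Y_2^s$. Your handling of the odd case, pairing $(n-1)/2$ dice with one leftover and then explicitly loosening $\frac{n-1}{2}$ to $\frac{n}{2}$, is the same argument as the paper's (which writes the odd case as $m=n+1$ with $n$ even), just stated a bit more transparently.
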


\begin{proof}
    We use Theorem \ref{thm:nonRecEV} to calculate the expected value for $n=2$.
    \begin{equation*}
    \mathbb{E}({Y_{2}^{s}}) \ = \  \frac{2}{1-(1-1/s)}-\frac{1}{1-(1-1/s)^2} \ = \ 2s-\frac{s^2}{2s-1} \ = \ \frac{3s^2-2s}{2s-1}.
    \end{equation*}
    Now, we assume $n$ is even, we define $I=\{1,3,\dots,n-1\}$ and we rewrite $Y_n^{s}$ as
    \begin{equation*}
    Y_n^{s} \ = \ \max(Z_1^s, \dots, Z_n^s) \ = \ \max[\max(Z_1^s, Z_2^s), \dots, \max(Z_{n-1}^s, Z_n^s)] \ \leq \ \sum_{i\,\in\, I}\max({Z_i^s}, Z_{i+1}^s).
    \end{equation*}
    Note that since $\{Z_n^s\}_n$ are i.i.d., then $\max({Z_i^s}, Z_{i+1}^s) \sim {Y_{2}^{s}}$. In particular, they have the same expected value. By the monotonicity and linearity of the expected value,
    \begin{equation} \label{eq:EVBound2}
    \mathbb{E}(Y_n^{s}) \ \leq \ \sum_{i\,\in\, I}\mathbb{E}[\max({Z_i^s}, Z_{i+1}^s)] \ = \ \sum_{i\,\in\, I}\mathbb{E}({Y_{2}^{s}}) \ = \ \frac{n}{2} \frac{3s^2-2s}{2s-1}.
    \end{equation}
    We can extend this formula to an odd number of dice. Let $m=n+1$, with $n$ even. Then
    \begin{align*} 
       Y_m \ &= \ \max(Z_1^s, ..., Z_n^s, Z_{n+1}^s) \\  
       \ &= \ \max[\max(Z_1^s, Z_2^s), ..., \max(Z_{n-1}^s, Z_n^s), Z_{n+1}^s] \\
       \ & \leq \ \sum_{i\,\in\, I}\max({Z_i^s}, Z_{i+1}^s) + Z_{n+1}^s .
    \end{align*}
    Similarly, by the monotonicity and linearity of the expected value,
    \begin{equation}
    \mathbb{E}(Y_m^{s}) \ \leq \ \sum_{i\,\in\, I}\mathbb{E}\left[\max({Z_i^s}, Z_{i+1}^s)\right] + \mathbb{E}(Z_{n+1}^s) \ = \sum_{i\,\in\, I}\mathbb{E}({Y_{2}^{s}}) \, +\mathbb{E}(Z_{n+1}^s) \ = \ \frac{n}{2} \frac{3s^2-2s}{2s-1} + s.
    \end{equation}
\end{proof}

We show numerically how the bound in Proposition \ref{prop:EVBound2} is an improvement over the bound in Proposition \ref{prop:EVBound1} in Figure \ref{fig:EVBound2}.

\begin{figure}[h!]
\centering
\caption{Comparison of bounds in Equations \eqref{eq:EVBound1} vs \eqref{eq:EVBound2}.}
\label{fig:EVBound2}

\begin{minipage}{0.48\textwidth}
\centering
\begin{tikzpicture}
\begin{axis}[
    xlabel={$n$},
    title={Fixed $s=10$},
    legend style={at={(0.05,0.95)},anchor=north west},
    grid=both,
    grid style={dashed, gray!30},
    tick align=outside,
    tick style={black},
    xtick={2,4,6,8,10},
    ymajorgrids=true
]

\addplot[
    color=blue,
    mark=*,
    thick
] coordinates {
    (2, 14.73684211)
    (4, 20.27337819)
    (6, 23.75349288)
    (8, 26.29578437)
    (10, 28.2994867)
};
\addlegendentry{$\mathbb{E}(Y_n^{s})$}

\addplot[
    color=red,
    mark=square*,
    thick,
    dotted
] coordinates {
    (2, 20)
    (4, 40)
    (6, 60)
    (8, 80)
    (10, 100)
};
\addlegendentry{$s \cdot n$}

\addplot[
    color=green!60!black,
    mark=triangle*,
    thick,
    dotted
] coordinates {
    (2, 14.73684211)
    (4, 29.47368421)
    (6, 44.21052632)
    (8, 58.94736842)
    (10, 73.68421053)
};
\addlegendentry{$\frac{n}{2} \frac{3s^2-2s}{2s-1}$}

\end{axis}
\end{tikzpicture}
\end{minipage}
\hfill
\begin{minipage}{0.48\textwidth}
\centering
\begin{tikzpicture}
\begin{axis}[
    xlabel={$s$},
    title={Fixed $n=4$},
    legend style={at={(0.05,0.95)},anchor=north west},
    grid=both,
    grid style={dashed, gray!30},
    tick align=outside,
    tick style={black},
    xtick={4,6,8,10,12},
    ymajorgrids=true
]

\addplot[
    color=blue,
    mark=*,
    thick
] coordinates {
    (4, 7.74178)
    (6, 11.9267)
    (8, 16.1018)
    (10, 20.2734)
    (12, 24.4433)
};
\addlegendentry{$\mathbb{E}(Y_n^{s})$}

\addplot[
    color=red,
    mark=square*,
    thick,
    dotted
] coordinates {
    (4, 16)
    (6, 24)
    (8, 32)
    (10, 40)
    (12, 48)
};
\addlegendentry{$s \cdot n$}

\addplot[
    color=green!60!black,
    mark=triangle*,
    thick,
    dotted
] coordinates {
    (4, 11.42857143)
    (6, 17.45454545)
    (8, 23.46666667)
    (10, 29.47368421)
    (12, 35.47826087)
};
\addlegendentry{$\frac{n}{2} \frac{3s^2-2s}{2s-1}$}

\end{axis}
\end{tikzpicture}
\end{minipage}
\end{figure}
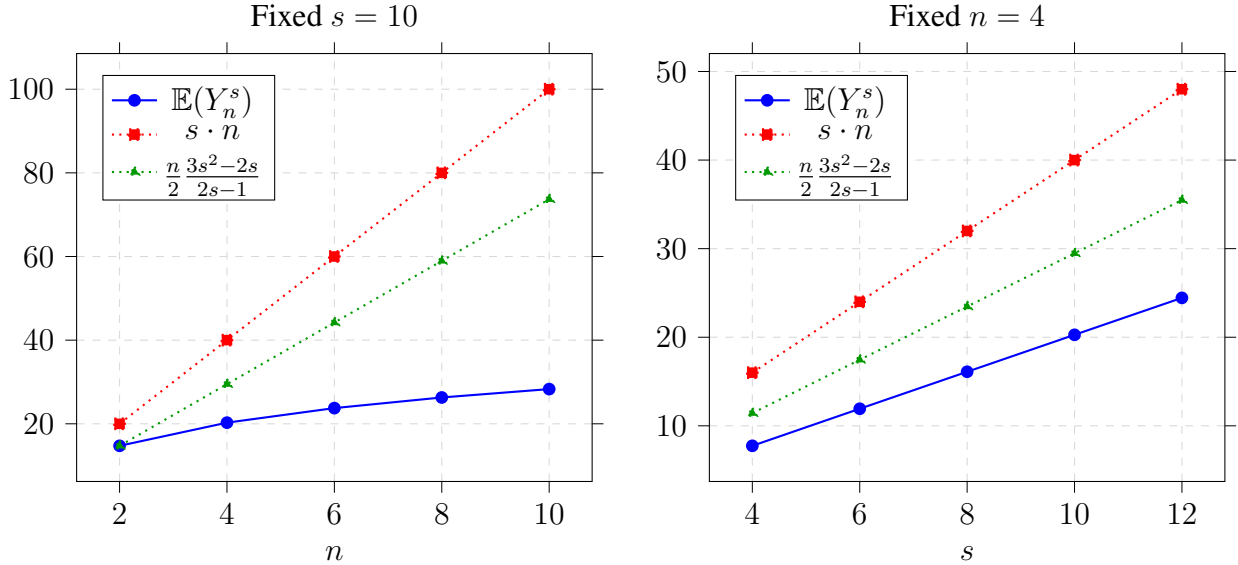

\subsection{Variance}
Once again, we consider the rolls as subsequent.\\

We present a lemma that expresses a fundamental property of associated random variables. Its proof can be found in \cite{AssRandomVar}. 

\begin{lemma}\label{lemma:covariance}
    Let $X = (X_1, \dots, X_{n}^{s})$ be a vector of independent random variables. Then for any real-valued functions $f$ and $g$ that are non-decreasing in each argument,
    \begin{equation*}
        \mathrm{Cov}(f(X), g(X)) \ \geq \ 0.
    \end{equation*}
\end{lemma}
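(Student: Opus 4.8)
The plan is to prove this classical association inequality by induction on the number of independent coordinates, reducing everything to the one-dimensional case. For the base case $n = 1$, I would use the standard coupling trick: let $X_1'$ be an independent copy of $X_1$, and observe that
\[
2\,\mathrm{Cov}(f(X_1), g(X_1)) \ = \ \mathbb{E}\big[(f(X_1) - f(X_1'))(g(X_1) - g(X_1'))\big].
\]
Because $f$ and $g$ are both non-decreasing, the two factors $f(X_1) - f(X_1')$ and $g(X_1) - g(X_1')$ always share the same sign (both are determined by the order of $X_1$ and $X_1'$), so their product is pointwise non-negative; the expectation, and hence the covariance, is therefore non-negative.

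For the inductive step, I would assume the claim for vectors of $n-1$ independent coordinates. Writing $X = (X_1, \dots, X_{n-1}, X_n)$ and conditioning on $X_n$, define
\[
F(x_n) \ = \ \mathbb{E}[f(X_1, \dots, X_{n-1}, x_n)], \qquad G(x_n) \ = \ \mathbb{E}[g(X_1, \dots, X_{n-1}, x_n)],
\]
integrating only over the first $n-1$ coordinates. Holding $X_n$ fixed, the maps $(x_1, \dots, x_{n-1}) \mapsto f(x_1, \dots, x_{n-1}, X_n)$ and $(x_1, \dots, x_{n-1}) \mapsto g(x_1, \dots, x_{n-1}, X_n)$ are non-decreasing in each argument, so the induction hypothesis gives $\mathbb{E}[f(X) g(X) \mid X_n] \geq F(X_n) G(X_n)$. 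Taking expectations yields $\mathbb{E}[f(X)g(X)] \geq \mathbb{E}[F(X_n)G(X_n)]$. Finally, since $f$ and $g$ are non-decreasing in their last argument, $F$ and $G$ are non-decreasing functions of the single variable $X_n$, so the base case applied to $F(X_n)$ and $G(X_n)$ gives
\[
\mathbb{E}[F(X_n)G(X_n)] \ \geq \ \mathbb{E}[F(X_n)]\,\mathbb{E}[G(X_n)] \ = \ \mathbb{E}[f(X)]\,\mathbb{E}[g(X)].
\]
Chaining the two inequalities produces $\mathrm{Cov}(f(X), g(X)) \geq 0$.

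The main obstacle is not the combinatorial structure of the induction --- which is clean --- but the measure-theoretic bookkeeping needed to justify the conditioning. I would need the products $f(X)g(X)$ to be integrable so that the covariance is well-defined; the cleanest route is to first establish the inequality for bounded non-decreasing $f$ and $g$, where all expectations are finite and, by independence, the conditional expectation over the first $n-1$ coordinates genuinely factors through $F$ and $G$, and then to remove boundedness by a truncation and monotone-limit argument. The one delicate point inside the induction is confirming that $F$ and $G$ inherit monotonicity from $f$ and $g$; this holds because expectation preserves the pointwise order $f(\cdot, x_n) \leq f(\cdot, x_n')$ whenever $x_n \leq x_n'$, but it is worth stating explicitly rather than leaving it implicit.
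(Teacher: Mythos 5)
Your proof is correct, and it is the full classical argument for association of independent random variables (Harris' inequality, essentially the proof in Esary--Proschan--Walkup \cite{AssRandomVar}): induction on the number of coordinates, with the coupling identity
\begin{equation*}
2\,\mathrm{Cov}\bigl(f(X_1),g(X_1)\bigr) \ = \ \mathbb{E}\bigl[(f(X_1)-f(X_1'))(g(X_1)-g(X_1'))\bigr]
\end{equation*}
as the base case and conditioning on the last coordinate as the inductive step. This is genuinely more than what the paper does. In the compiled text the lemma is not proved at all but delegated to \cite{AssRandomVar}, and the draft proof retained in the source (after the document ends) applies your base-case coupling argument directly to the whole vector $X$: it takes an independent copy $X'$ and asserts that $f(X)-f(X')$ and $g(X)-g(X')$ always share a sign. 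That assertion is valid only when $n=1$. For $n>1$ the coordinatewise order on $\mathbb{R}^n$ is merely partial, and incomparable realizations (say $X=(1,0)$ and $X'=(0,1)$, with $f$ depending only on the first coordinate and $g$ only on the second) make the two factors have opposite signs, so the pointwise inequality --- and with it the one-step coupling argument --- breaks down in the multivariate case. Your inductive step, where independence lets you condition on $X_n$, apply the induction hypothesis to the slices $f(\cdot,x_n)$ and $g(\cdot,x_n)$, and then apply the one-dimensional case to the non-decreasing functions $F$ and $G$, is precisely the ingredient that repairs this. Your attention to integrability (proving the inequality first for bounded monotone functions, then removing boundedness by truncation and monotone limits) is also the right way to make the conditioning rigorous; for the paper's actual application ($f$ and $g$ built from maxima and sums of geometric variables, all with finite second moments) that extra care is not strictly needed, but it is needed for the lemma in the generality stated.
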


\begin{lemma}\label{lemma:VarMax<Sum}
    Let $\{{Z_i^s}\}^n_{i=1}$ be a sequence of independent random variables. Then
    \begin{equation}
    \mathrm{Var}\left(\max_{i\leq n}({Z_i^s})\right) \ \leq \ \sum_{i=1}^n\mathrm{Var}({Z_i^s}).
    \end{equation}
\end{lemma}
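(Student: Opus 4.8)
\emph{Setup and strategy.} The plan is to compare the maximum against the full sum and exploit independence. Write $M = \max_{i\leq n} Z_i^s$ and $S = \sum_{i=1}^n Z_i^s$. Since the $Z_i^s$ are independent, $\mathrm{Var}(S) = \sum_{i=1}^n \mathrm{Var}(Z_i^s)$, so the claim is equivalent to $\mathrm{Var}(M) \leq \mathrm{Var}(S)$. I would prove this by decomposing $S = M + (S-M)$ and expanding the variance:
\begin{equation*}
\mathrm{Var}(S) \ = \ \mathrm{Var}(M) + \mathrm{Var}(S-M) + 2\,\mathrm{Cov}(M,\,S-M).
\end{equation*}
Because $\mathrm{Var}(S-M) \geq 0$ always, the desired bound $\mathrm{Var}(M) \leq \mathrm{Var}(S)$ follows immediately once I show that $\mathrm{Cov}(M,\,S-M) \geq 0$.

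\emph{Getting the covariance sign.} To control the covariance I would apply Lemma \ref{lemma:covariance} to the independent vector $(Z_1^s,\dots,Z_n^s)$ with the two functions $f(z_1,\dots,z_n) = \max_i z_i$ and $g(z_1,\dots,z_n) = \sum_i z_i - \max_i z_i$, so that $f = M$ and $g = S-M$. The function $f$ is plainly non-decreasing in each coordinate, so the real content is verifying that $g$ is as well; this is the step I expect to be the main (if modest) obstacle, since subtracting the maximum from the sum could a priori destroy monotonicity. Concretely, fix all coordinates but the $j$-th and increase $z_j$ by some $\delta > 0$: the sum rises by exactly $\delta$, while the maximum $\max(z_j,\,\max_{i\neq j} z_i)$ rises by some amount in $[0,\delta]$ (only one coordinate moved, and it moved up by $\delta$). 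Hence $g$ changes by $\delta$ minus a quantity in $[0,\delta]$, which is $\geq 0$, so $g$ is non-decreasing in each argument.

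\emph{Conclusion.} With both $f$ and $g$ non-decreasing coordinatewise, Lemma \ref{lemma:covariance} gives $\mathrm{Cov}(M,\,S-M) \geq 0$, and the variance decomposition above yields
\begin{equation*}
\mathrm{Var}\left(\max_{i\leq n} Z_i^s\right) \ = \ \mathrm{Var}(M) \ \leq \ \mathrm{Var}(S) \ = \ \sum_{i=1}^n \mathrm{Var}(Z_i^s),
\end{equation*}
as claimed. (An alternative route would induct on $n$, splitting $\max(Z_1^s,\dots,Z_n^s) = \max(\max_{i<n} Z_i^s,\, Z_n^s)$ and applying the two-variable case; there one writes $\max(X,Y) + \min(X,Y) = X+Y$ and uses $\mathrm{Cov}(\max,\min) \geq 0$ from Lemma \ref{lemma:covariance}, since $\max_{i<n} Z_i^s$ and $Z_n^s$ are independent. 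I would favor the direct argument above, as it avoids the induction and isolates the single monotonicity check.)
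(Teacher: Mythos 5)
Your proof is correct and follows essentially the same route as the paper's: the paper also writes $\sum_i \mathrm{Var}(Z_i^s) = \mathrm{Var}(S)$, decomposes $S$ as the maximum plus the remainder (written there as $\sum_{i\in\Gamma} Z_i^s$ with $\Gamma$ the indices excluding the argmax, which is exactly your $S-M$), expands the variance, and invokes Lemma \ref{lemma:covariance} for the covariance sign. Your explicit coordinatewise-monotonicity check for $g = S - M$ is a detail the paper leaves implicit, so your write-up is if anything slightly more complete, but the argument is the same.
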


\begin{proof}
Let $\Gamma = \ \{1,..,n\} \ \setminus \ \{\arg\, \max_{i\leq n}{Z_i^s}\}$. By the independence of the random variables, we have
\begin{align*} 
    \sum_{i=1}^n\mathrm{Var}({Z_i^s}) \ &= \ \mathrm{Var}\left(\sum_{i=1}^n{Z_i^s}\right) \\  \ &= \ \mathrm{Var}\left(\max_{i\leq n}{Z_i^s} + \sum_{i \in \Gamma}{Z_i^s}\right) \\  \ &= \ \mathrm{Var}\left(\max_{i\leq n}{Z_i^s}\right) + \mathrm{Var}\left(\sum_{i\in \Gamma}{Z_i^s}\right) + 2\ \mathrm{Cov}\left(\max_{i\leq n}{Z_i^s},\sum_{i \in \Gamma}{Z_i^s}\right)\\
    &\geq \ \mathrm{Var}\left(\max_{i\leq n}{Z_i^s}\right)
\end{align*}
where the inequality follows from the non-negativity of the covariance term due to Lemma \ref{lemma:covariance}.
\end{proof}

\begin{proposition} \label{prop:VarBound2}
Assume $s \ \geq \ n \ > \ 0$, $\ {Z_i^s}\overset{\mathrm{iid}}{\sim}\mathrm{Geom}(1/s)$, $\ Y_n^{s}=\max_{i\leq n}{Z_i^s}$. Then
\begin{equation} \label{eq:VarBound2}
\mathrm{Var}(Y_n^{s}) \ \leq \ ns(s-1).
\end{equation}
\end{proposition}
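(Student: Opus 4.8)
The plan is to obtain this bound as an immediate corollary of Lemma \ref{lemma:VarMax<Sum} together with the standard variance formula for a geometric random variable. Since Lemma \ref{lemma:VarMax<Sum} applies to any sequence of independent random variables, and our ${Z_i^s}$ are in particular i.i.d.\ and hence independent, it gives directly
\begin{equation*}
    \mathrm{Var}(Y_n^{s}) \ = \ \mathrm{Var}\left(\max_{i\leq n}{Z_i^s}\right) \ \leq \ \sum_{i=1}^n \mathrm{Var}({Z_i^s}).
\end{equation*}
So the only remaining task is to evaluate the right-hand side explicitly.

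The key computation is the variance of a single geometric random variable. Each ${Z_i^s} \sim \mathrm{Geom}(1/s)$ counts the number of turns until the first success, so with $p = 1/s$ and $q = 1 - p$ the classical formula gives $\mathrm{Var}({Z_i^s}) = q/p^2 = (1 - 1/s)/(1/s)^2 = s(s-1)$. First I would record this value (one could cite it directly or recover it from the moment computations already used in Section \ref{s:4}), and then observe that since the ${Z_i^s}$ are identically distributed, every term in the sum equals $s(s-1)$.

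Combining the two observations, the sum of $n$ identical variances is $\sum_{i=1}^n \mathrm{Var}({Z_i^s}) = n\,s(s-1) = ns(s-1)$, which yields the claimed bound
\begin{equation*}
    \mathrm{Var}(Y_n^{s}) \ \leq \ \sum_{i=1}^n \mathrm{Var}({Z_i^s}) \ = \ ns(s-1).
\end{equation*}
I do not anticipate any genuine obstacle here, since all the analytic difficulty has been absorbed into Lemma \ref{lemma:VarMax<Sum} (and ultimately into the associated-random-variables inequality of Lemma \ref{lemma:covariance}); the present proposition is essentially a one-line consequence once the geometric variance $s(s-1)$ is substituted. The only point requiring minor care is to state clearly that the i.i.d.\ hypothesis supplies both the independence needed to invoke the lemma and the identical distribution needed to replace the sum of variances by $n$ copies of a single one.
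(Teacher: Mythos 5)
Your proposal is correct and follows exactly the paper's own argument: invoke Lemma \ref{lemma:VarMax<Sum} to bound $\mathrm{Var}(Y_n^{s})$ by $\sum_{i=1}^n \mathrm{Var}({Z_i^s})$, then substitute the geometric variance $(1-1/s)/(1/s)^2 = s(s-1)$ and sum the $n$ identical terms. No gaps; the paper's proof is the same one-line deduction.
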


\begin{proof}
    The proposition is a direct consequence of Lemma \ref{lemma:VarMax<Sum}:
    \begin{equation*}
    \mathrm{Var}(Y_n^{s}) \ = \ \mathrm{Var}\left(\max_{i\leq n}{Z_i^s}\right) \ \leq \ \sum_{i=1}^n \mathrm{Var}({Z_i^s}) \ = \ n\,\frac{1-1/s}{1/s^2} \ = \ ns(s-1).
    \end{equation*}
\end{proof}

In Figure \ref{fig:VarBound2} we show how this bound is an improvement over the elementary bound in Proposition \ref{prop:VarBound1}.

\begin{figure}[h!]
\centering
\caption{Comparison of bounds in Equations \eqref{eq:VarBound1} vs \eqref{eq:VarBound2}}
\label{fig:VarBound2}

\begin{minipage}{0.48\textwidth}
\centering
\begin{tikzpicture}
\begin{axis}[
    xlabel={$n$},
    title={Fixed $s=10$},
    legend style={at={(0.05,0.95)},anchor=north west},
    grid=both,
    grid style={dashed, gray!30},
    tick align=outside,
    tick style={black},
    xtick={2,4,6,8,10},
    ymajorgrids=true
]

\addplot[
    color=blue,
    mark=*,
    thick
] coordinates {
    (2, 112.6869806)
    (4, 128.3269068)
    (6, 134.4325467)
    (8, 137.6785326)
    (10, 139.6915048)
};
\addlegendentry{$VarY_n$}

\addplot[
    color=green!60!black,
    mark=triangle*,
    thick,
    dotted
] coordinates {
    (2, 280)
    (4, 660)
    (6, 1040)
    (8, 1420)
    (10, 1800)
};
\addlegendentry{$s^2(2n-1)-ns$}

\addplot[
    color=red,
    mark=square*,
    thick,
    dotted
] coordinates {
    (2, 180)
    (4, 360)
    (6, 540)
    (8, 720)
    (10, 900)
};
\addlegendentry{$ns(s-1)$}

\end{axis}
\end{tikzpicture}
\end{minipage}
\hfill
\begin{minipage}{0.48\textwidth}
\centering
\begin{tikzpicture}
\begin{axis}[
    xlabel={$s$},
    title={Fixed $n=2$},
    legend style={at={(0.05,0.95)},anchor=north west},
    grid=both,
    grid style={dashed, gray!30},
    tick align=outside,
    tick style={black},
    xtick={2,4,6,8,10},
    ymajorgrids=true
]

\addplot[
    color=blue,
    mark=*,
    thick
] coordinates {
    (2, 2.666666667)
    (4, 15.18367347)
    (6, 37.68595041)
    (8, 70.18666667)
    (10, 112.6869806)
};
\addlegendentry{$VarY_n$}

\addplot[
    color=green!60!black,
    mark=triangle*,
    thick,
    dotted
] coordinates {
    (2, 8)
    (4, 40)
    (6, 96)
    (8, 176)
    (10, 280)
};
\addlegendentry{$s^2(2n-1)-ns$}

\addplot[
    color=red,
    mark=square*,
    thick,
    dotted
] coordinates {
    (2, 4)
    (4, 24)
    (6, 60)
    (8, 112)
    (10, 180)
};
\addlegendentry{$ns(s-1)$}

\end{axis}
\end{tikzpicture}
\end{minipage}

\end{figure}
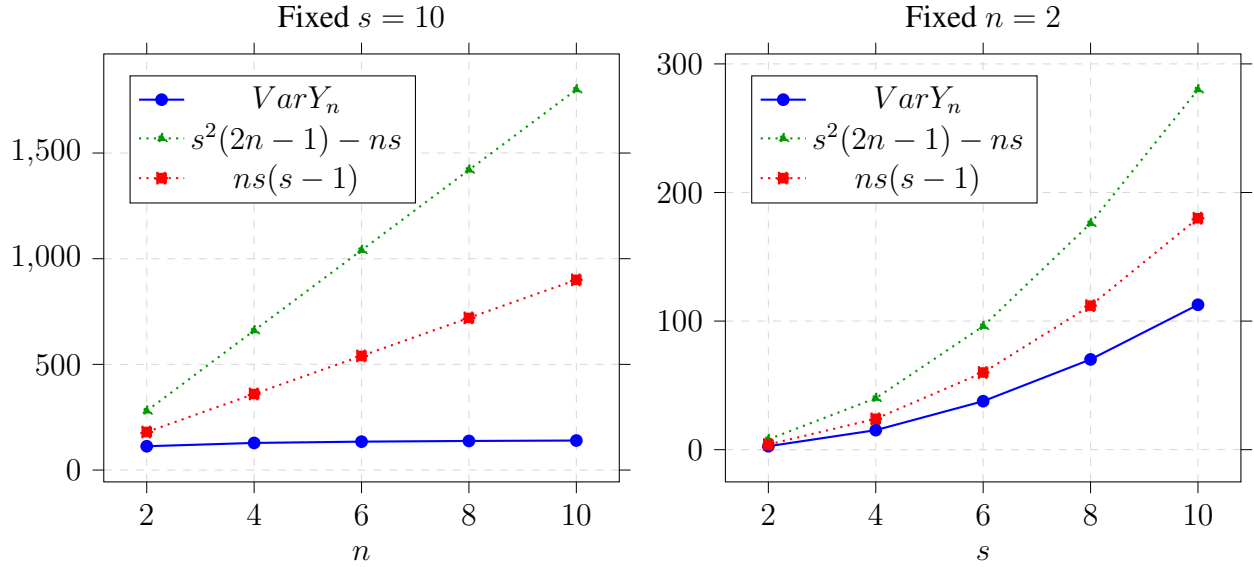

\newpage
\appendix
\section{Additional Results}
In this appendix, for the sake of completeness, we present some simple lemmas used to prove results in the paper.
\begin{lemma}\label{lemma:series}
    Let $a,b \in \mathbb{R}$, $|a^b|<1$. Then
    \begin{equation}
        \sum_{i=1}^{\infty} ia^{bi} \ = \ \frac{a^b}{(1-a^b)^2}, \qquad \sum_{i=1}^{\infty} i^2a^{bi} \ = \ \frac{a^b(1+a^b)}{(1-a^b)^3}.
    \end{equation}
\end{lemma}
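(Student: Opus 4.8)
The plan is to reduce both identities to standard power-series manipulations by substituting $x = a^b$. The hypothesis $|a^b| < 1$ becomes $|x| < 1$, so $x$ lies strictly inside the unit disk; in this regime everything converges absolutely, and I can differentiate the geometric series term by term. This is the cleanest route, and it handles both sums with essentially the same tool applied one and two times respectively.

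First I would recall the geometric series $\sum_{i=0}^{\infty} x^i = 1/(1-x)$, valid for $|x| < 1$. Since a power series may be differentiated term by term on any compact subset of its open disk of convergence (here the radius is $1$, and $|x| < 1$ is strictly inside), differentiating both sides gives $\sum_{i=1}^{\infty} i x^{i-1} = 1/(1-x)^2$. Multiplying through by $x$ yields $\sum_{i=1}^{\infty} i x^i = x/(1-x)^2$, which upon back-substituting $x = a^b$ is exactly the first claimed identity $\sum_{i=1}^{\infty} i a^{bi} = a^b/(1-a^b)^2$.

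For the second identity I would differentiate once more. Starting from $\sum_{i=1}^{\infty} i x^i = x/(1-x)^2$ and differentiating term by term gives $\sum_{i=1}^{\infty} i^2 x^{i-1}$ on the left; on the right, the quotient rule together with the simplification $(1-x)^2 + 2x(1-x) = (1+x)(1-x)$ gives $\frac{d}{dx}\bigl[x/(1-x)^2\bigr] = (1+x)/(1-x)^3$. Multiplying by $x$ and substituting $x = a^b$ produces $\sum_{i=1}^{\infty} i^2 a^{bi} = a^b(1+a^b)/(1-a^b)^3$, as desired.

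The only point requiring care is the justification of term-by-term differentiation, but this is a standard theorem for power series, and since $|x| < 1$ we are always strictly inside the radius of convergence $1$, so no subtlety arises. The remaining steps are routine algebra, which I would carry out explicitly but which presents no real obstacle.
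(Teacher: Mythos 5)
Your proof is correct and follows essentially the same route as the paper's: substitute $x=a^b$, differentiate the geometric series term by term (justified by being strictly inside the radius of convergence), multiply by $x$, and repeat the differentiation once more for the second identity. The only cosmetic difference is that the paper phrases the justification via uniform convergence on compact subintervals $[-r,r]\subset(-1,1)$, which is the same standard fact you invoke.
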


\begin{proof}
    Let $x=a^b\in(-1,1)$. By the geometric series formula,
    \begin{equation*}
        \sum_{i=0}^{\infty}x^{i} \ = \ \frac{1}{1-x}.
    \end{equation*}
    For all $x$ in $[-r,r]\subset(-1,1)$ the series $\sum_{i=0}^{\infty}ix^{i-1}$ converges uniformly and so we may differentiate both sides with respect to $x$:
    \begin{equation*}
        \frac{d}{dx}\left(\sum_{i=0}^{\infty}x^{i}\right) \ = \ \sum_{i=0}^{\infty}ix^{i-1} \ = \ \frac{1}{(1-x)^2}.
    \end{equation*}
    Then, multiplying both terms by $x$ yields:
    \begin{equation*}
        \sum_{i=0}^{\infty}ix^{i} \ = \ \frac{x}{(1-x)^2}, \qquad |x|<1.
    \end{equation*}
    Since the $i=0$ term contributes 0, we may drop it. So, substituting back $a^b=x$, we have the desired result.\\
    Now we prove the second result. We know that
    \begin{equation*}
        \sum_{i=0}^{\infty}ix^{i} \ = \ \frac{x}{(1-x)^2}.
    \end{equation*}
    For all $x$ in $[-r,r]\subset(-1,1)$ the series $\sum_{i=0}^{\infty}i^2x^{i-1}$ converges uniformly and so we may differentiate both sides with respect to $x$
    \begin{equation*}
        \frac{d}{dx}\left(\sum_{i=0}^{\infty}ix^{i}\right) \ = \ \sum_{i=0}^{\infty}i^2x^{i-1} \ = \ \frac{(1-x)^2+2(1-x)x}{(1-x)^4} \ = \ \frac{1-x^2}{(1-x)^4} \ = \ \frac{1+x}{(1-x)^3}.
    \end{equation*}
    Then, multiplying both terms by $x$ yields
    \begin{equation*}
        \sum_{i=0}^{\infty}i^2x^{i} \ = \ \frac{x(1+x)}{(1-x)^2}, \qquad |x| < 1.
    \end{equation*}
    Since the $i=0$ term contributes 0, we may drop it. So, substituting back $a^b=x$, we have the desired result.
    \end{proof}

\section{Equivalency}
    We have found the moments of $Y_n^{s}$ using two different methods, which led to a recurrence in Section \ref{s:3} and an explicit formula in Section \ref{s:4}. These two approaches could be proved to be equivalent, but we limit our study to proving the equivalence of the formulas for the variance and the expected value. In order to do that, we need the convolution identity, a well-known result that can be found in \cite{Mil17}, and the Cauchy-Hadamard property of power series, which can be found for example in \cite[p.69]{Rudin_2010}.
    \begin{lemma}[Convolution identity]\label{lemma:conv_id}
    Let $f(z)$ and $g(z)$ be two exponential generating functions:
    \begin{equation*}
        f(z) \ = \ \sum_{n=0}^{\infty} a_n \frac{z^n}{n!},\qquad g(z) \  =  \ \sum_{n=0}^{\infty} b_n \frac{z^n}{n!}.
    \end{equation*}
    Their product is given by
    \begin{equation*}
        f(z)g(z) \ = \ \sum_{n=0}^{\infty}\left[\sum_{j=0}^n \binom{n}{j} a_j b_{n-j}\right] \frac{z^n}{n!}.
    \end{equation*}
    \end{lemma}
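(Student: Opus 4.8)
The plan is to prove this by forming the Cauchy product of the two power series and then rewriting the resulting coefficients in terms of binomial coefficients. First I would treat $f(z)$ and $g(z)$ as power series with coefficients $a_j/j!$ and $b_m/m!$, and invoke the standard result that, wherever both series converge absolutely, their product is again a power series whose $z^n$-coefficient is obtained by collecting all pairwise products of terms whose exponents sum to $n$. Concretely, writing $f(z) = \sum_{j} (a_j/j!)\, z^j$ and $g(z) = \sum_{m} (b_m/m!)\, z^m$, the coefficient of $z^n$ in $f(z)g(z)$ is $\sum_{j=0}^n \frac{a_j}{j!}\cdot\frac{b_{n-j}}{(n-j)!}$.

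The second step is the purely algebraic observation that $\frac{1}{j!\,(n-j)!} = \frac{1}{n!}\binom{n}{j}$, which follows at once from the definition $\binom{n}{j} = \frac{n!}{j!\,(n-j)!}$. Factoring $1/n!$ out of the coefficient of $z^n$ then yields $\frac{1}{n!}\sum_{j=0}^n \binom{n}{j}\, a_j b_{n-j}$, and reassembling the series gives exactly the claimed expression $f(z)g(z) = \sum_{n=0}^\infty \bigl[\sum_{j=0}^n \binom{n}{j}\, a_j b_{n-j}\bigr]\, z^n/n!$. This half of the argument is just bookkeeping and carries no analytic content.

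The only real subtlety, and the step I expect to require the most care, is justifying the rearrangement implicit in the Cauchy product: reindexing the double sum over $(j,m)$ by the antidiagonals $n = j+m$ is valid only when the double series converges absolutely. This is where the Cauchy--Hadamard theorem cited in the excerpt enters, since it guarantees that each series has a positive radius of convergence on which it converges absolutely, after which the Cauchy-product theorem permits summation in any order. If instead one prefers to argue at the level of formal power series, the identity becomes an identity of formal series and the rearrangement is automatic, because only finitely many pairs $(j,m)$ contribute to each fixed power $z^n$; in that framing no convergence hypothesis is needed and the proof reduces entirely to the binomial-coefficient computation of the second paragraph.
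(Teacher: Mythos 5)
Your proof is correct, but there is nothing in the paper to compare it against: the authors do not prove this lemma at all, they state it as a known result and cite it to Miller's \emph{The Probability Lifesaver}, alongside the Cauchy--Hadamard lemma. Your argument is the standard one --- form the Cauchy product, so that the coefficient of $z^n$ in $f(z)g(z)$ is $\sum_{j=0}^n \frac{a_j}{j!}\cdot\frac{b_{n-j}}{(n-j)!}$, then absorb the factorials via $\frac{1}{j!\,(n-j)!}=\frac{1}{n!}\binom{n}{j}$ --- and it supplies exactly the proof the paper omits. One point to tighten: you claim Cauchy--Hadamard ``guarantees that each series has a positive radius of convergence,'' but it does not; it only computes the radius, which can be zero for an exponential generating function (take $a_n=(n!)^2$). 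So the analytic version of your argument needs an explicit hypothesis that both series converge absolutely on a common neighborhood of $0$ --- true in the paper's application, where the coefficients satisfy $a_n = q^n x_n$ with $x_n \le n/p$ and $b_n = p^{n}$, making both EGFs entire --- or else you should lean entirely on the formal-power-series framing you offer at the end, where the antidiagonal regrouping touches only finitely many terms per coefficient and no convergence hypothesis is needed. Either patch is one sentence; with it, your proof is complete and is precisely the argument the cited reference contains.
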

    \begin{lemma}[Cachy-Hadamard]\label{lemma:CH}
        Let \( \sum_{n=0}^\infty a_n z^n \) be a power series with real coefficients. If
        \begin{equation*}
            \lim_{n \to \infty} \sqrt[n]{|a_n|}=0,
        \end{equation*}
        then the series converges uniformly for any $z\in\mathbb{R}$.
    \end{lemma}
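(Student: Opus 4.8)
The plan is to derive the lemma from the Cauchy--Hadamard radius-of-convergence formula together with the Weierstrass M-test. The first step is simply to reinterpret the hypothesis: the assumption $\lim_{n\to\infty}\sqrt[n]{|a_n|}=0$ is exactly the statement $\limsup_{n\to\infty}\sqrt[n]{|a_n|}=0$, and the Cauchy--Hadamard formula $1/R=\limsup_{n\to\infty}\sqrt[n]{|a_n|}$ then forces the radius of convergence to be $R=+\infty$. From this one records immediately that $\sum_{n=0}^\infty a_n z^n$ converges absolutely at every real point $z$, so the function is well defined on all of $\mathbb{R}$; the remaining work is to upgrade this to the uniform convergence asserted for any $z\in\mathbb{R}$.

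Next I would fix an arbitrary real number $z$; since every real number lies in some bounded interval, it suffices to work on $[-r,r]$ for an arbitrary $r>0$ and produce a $z$-independent summable majorant there. The key estimate is read directly off the hypothesis: because $\sqrt[n]{|a_n|}\to 0$, applying the definition of the limit with threshold $1/(2r)$ gives an index $N$ such that $\sqrt[n]{|a_n|}<1/(2r)$ for all $n\ge N$. Raising to the $n$th power yields $|a_n|<(2r)^{-n}$, so for every $z$ with $|z|\le r$ we get the bound $|a_n z^n|\le |a_n|\,r^n<2^{-n}$ for all $n\ge N$. Taking $M_n=2^{-n}$ for $n\ge N$ (the finitely many initial terms form a fixed polynomial and are harmless) produces a convergent series $\sum M_n<\infty$ that dominates $|a_n z^n|$ uniformly over $[-r,r]$.

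With this majorant in hand the Weierstrass M-test applies verbatim and delivers uniform convergence of $\sum_{n=0}^\infty a_n z^n$ on $[-r,r]$. Since $r>0$ was arbitrary and every real $z$ is contained in such an interval, this is exactly the uniform convergence claimed for any $z\in\mathbb{R}$, and it is the precise property that licenses the term-by-term manipulations of the power (and exponential generating) series needed for the generating-function arguments in this appendix.

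The one point I would flag as the main subtlety, rather than a genuine obstacle, is that the geometric majorant is not fully canonical: the cutoff index $N$ after which $|a_n z^n|<2^{-n}$ holds is determined by the threshold $1/(2r)$ and therefore grows with $r$. Consequently the uniformity is realized on each bounded interval through its own majorant, which is exactly the content of the quantifier ``for any $z\in\mathbb{R}$'' and is all that is required to differentiate or rearrange the series on any bounded region. I would present the estimate so that the dependence of $N$ on $r$ is explicit, making clear that the single hypothesis $\sqrt[n]{|a_n|}\to 0$ supplies, uniformly on every such interval, the comparison needed to invoke the M-test.
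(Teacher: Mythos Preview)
Your argument is correct and is the standard route: read the hypothesis as $\limsup\sqrt[n]{|a_n|}=0$, invoke the Cauchy--Hadamard formula to get infinite radius, and then use a geometric majorant together with the Weierstrass $M$-test to obtain uniform convergence on every $[-r,r]$. You also correctly flag that the phrase ``uniformly for any $z\in\mathbb{R}$'' must be read as uniform on each bounded interval (the $N$ depends on $r$), which is precisely the property the appendix actually uses to pass limits through the generating functions.

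There is nothing to compare against: the paper does not supply its own proof of this lemma but simply cites Rudin's \emph{Principles of Mathematical Analysis}. Your write-up is essentially the argument one finds there, so it is entirely in line with what the paper intends.
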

    The following result and its proof closely follow the content of \cite{SR90}.
    \begin{theorem}\label{th:eq1}
        Let $s = 1/p \geq1$ and $0<n\leq s$. The expected value defined by Equation \eqref{eq:EvnonRec}
        \begin{equation*}
            \mathbb{E}(Y_n^{s}) \ = \ \sum_{k=1}^n\binom{n}{k}(-1)^{k+1}\frac{1}{1-(1-p)^k}  
        \end{equation*}
        is the solution of the recurrence relation 
        \begin{equation*}
        \mathbb{E}(Y_n^{s}) \ = \ \frac{1+\sum_{i=1}^{n-1} \binom{n}{i} p^i (1 - p)^{n - i} \mathbb{E}({Y_{n-i}^{s}})}{1-(1 - p)^{n}}
        \end{equation*}
        defined for every $n>1$ with $\mathbb{E}(Y_1^{s})=1/p$.
    \end{theorem}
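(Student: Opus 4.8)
The plan is to verify directly that the explicit formula of Equation \eqref{eq:EvnonRec} satisfies both the stated initial condition and the recurrence; since clearing the (nonzero) denominator $1-(1-p)^n$ expresses $\mathbb{E}(Y_n^s)$ uniquely in terms of the lower-order values, this suffices to identify the explicit formula as \emph{the} solution. Write $q=1-p$ and abbreviate $E_n:=\mathbb{E}(Y_n^s)$. The base case is immediate: the formula gives $E_1=\binom{1}{1}\frac{1}{1-q}=\frac1p$, matching $\mathbb{E}(Y_1^s)=1/p$. Note also that $E_0=0$ (the empty sum), which will let me fold in a vanishing boundary term below.

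First I would recast the recurrence in a cleaner, denominator-free shape. Reindexing the sum by $j=n-i$ turns $\sum_{i=1}^{n-1}\binom{n}{i}p^i q^{n-i}E_{n-i}$ into $\sum_{j=1}^{n-1}\binom{n}{j}p^{n-j}q^{j}E_j$, so clearing the denominator gives $E_n(1-q^n)=1+\sum_{j=1}^{n-1}\binom{n}{j}p^{n-j}q^{j}E_j$. Moving $q^nE_n$ to the right (it is exactly the $j=n$ term) and adjoining the vanishing $j=0$ term, the recurrence is seen to be equivalent to
\begin{equation*}
\sum_{j=0}^{n}\binom{n}{j}p^{n-j}q^{j}E_j \ = \ E_n-1. \tag{$\star$}
\end{equation*}
This is precisely the one-step conditioning identity $E_n=1+\sum_{j=0}^n P_{n,j}E_j$ underlying Theorem \ref{thm:EVRecFormula}, so it is enough to establish $(\star)$ for the explicit $E_n$.

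Next I would substitute the closed form $E_j=\sum_{k=1}^{j}\binom{j}{k}(-1)^{k+1}\frac{1}{1-q^k}$ into the left side of $(\star)$ and exchange the order of the (finite) summations, collecting the coefficient of $\frac{(-1)^{k+1}}{1-q^k}$. The inner sum over $j$ is $\sum_{j=k}^{n}\binom{n}{j}\binom{j}{k}p^{n-j}q^{j}$. Applying the subset-of-a-subset identity $\binom{n}{j}\binom{j}{k}=\binom{n}{k}\binom{n-k}{j-k}$ and shifting $m=j-k$, this factors as $\binom{n}{k}q^{k}\sum_{m=0}^{n-k}\binom{n-k}{m}p^{n-k-m}q^{m}=\binom{n}{k}q^{k}(p+q)^{n-k}=\binom{n}{k}q^{k}$, since $p+q=1$. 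Hence the left side of $(\star)$ collapses to $\sum_{k=1}^{n}\binom{n}{k}(-1)^{k+1}\frac{q^k}{1-q^k}$.

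Finally, writing $\frac{q^k}{1-q^k}=\frac{1}{1-q^k}-1$ splits this sum into $E_n$ minus $\sum_{k=1}^{n}\binom{n}{k}(-1)^{k+1}=1-(1-1)^n=1$, which yields exactly $E_n-1$ and proves $(\star)$. The main obstacle is really just bookkeeping: carefully justifying the equivalence between the stated denominator form of the recurrence and $(\star)$ — in particular the reindexing $i\mapsto n-j$ and the harmless inclusion of the vanishing $j=0$ term — after which the combinatorial core (the subset identity followed by the binomial theorem) is very short. Notably this route is entirely finite and elementary, so it sidesteps the generating-function machinery of Lemmas \ref{lemma:conv_id} and \ref{lemma:CH}; I would keep those in reserve only to mirror the argument of \cite{SR90} through the exponential generating function $F(z)=\sum_{n\ge0}E_n\,z^n/n!$, which by $(\star)$ satisfies the functional equation $F(z)-e^{pz}F(qz)=e^{z}-1$.
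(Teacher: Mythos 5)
Your proof is correct, but it takes a genuinely different route from the paper's. You \emph{verify} that the closed form satisfies the recurrence, and your opening observation is what makes this legitimate: since $1-(1-p)^n\neq 0$, the recurrence together with the initial condition $\mathbb{E}(Y_1^s)=1/p$ determines the sequence uniquely, so exhibiting one sequence that satisfies both identifies it as \emph{the} solution. The verification itself is sound at every step: the reindexing $j=n-i$, the absorption of the $j=n$ term and the vanishing $j=0$ term (using $E_0=0$) into your identity $(\star)$, the exchange of the two finite sums, the identity $\binom{n}{j}\binom{j}{k}=\binom{n}{k}\binom{n-k}{j-k}$ followed by the binomial theorem collapsing the inner sum to $\binom{n}{k}q^k$, and the final split $\frac{q^k}{1-q^k}=\frac{1}{1-q^k}-1$ together with $\sum_{k=1}^{n}\binom{n}{k}(-1)^{k+1}=1$. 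The paper instead follows \cite{SR90}: it forms the exponential generating function $F(z)=\sum_{n\geq 0}x_n z^n/n!$, uses the convolution identity (Lemma \ref{lemma:conv_id}) to obtain the functional equation $F(z)=e^z+F(qz)e^{pz}-1$, iterates $H(z)=F(z)e^{-z}$ to reach $H(z)=\sum_{k\geq 0}\left(1-e^{-q^kz}\right)$, and extracts coefficients via Tonelli's theorem; that argument needs an a priori growth bound $x_n\leq n/p$ (checked afterwards through Proposition \ref{prop:EVBound1}) and the Cauchy--Hadamard lemma (Lemma \ref{lemma:CH}) to justify the limit $H(q^m z)\to H(0)$. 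What the paper's route buys is a genuine \emph{solution method}: it derives the closed form from the recurrence without knowing it in advance, and the same machinery is reused for the second moment in Theorem \ref{th:eq2}. What your route buys is brevity and the elimination of all analytic machinery --- no generating functions, no convergence or interchange-of-limits issues, no auxiliary bound --- since everything reduces to finite algebraic identities; it would also adapt with no extra difficulty to the second-moment recurrence. Both are complete proofs of the stated theorem.
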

    \begin{proof}
    For simpler notation, let $q = (1 - p)$ and define $x_n = \mathbb{E}(Y_n^{s})$ for $n \geq 1$, with $x_0 := 0$.\\
    For $n = 1$, we have $x_1 = 1/p$, and the theorem holds in this case. This also includes the case $s = 1$. \\ ~\\
    For $n > 1$ and $s>1$, we can rewrite the recurrence as
        \begin{equation} \label{eq:recEqEv}
            x_n \ = \ 1 + \sum_{i=0}^n \binom{n}{i} q^{n-i} p^ix_{n-i}  \ = \ 1 + \sum_{j=0}^n \binom{n}{j} q^j p^{n-j}x_j.
        \end{equation}
        Therefore, the exponential generating function $F(z) := \sum_{n=0}^{\infty} x_n \frac{z^n}{n!}$ becomes
        \begin{equation*}
            \sum_{n=0}^{\infty} x_n \frac{z^n}{n!} \ = \ \sum_{n=0}^{\infty} \frac{z^n}{n!} \left[1 + \sum_{j=0}^n \binom{n}{j} q^j p^{n-j}x_j\right] - 1.
        \end{equation*}
        Note that we subtracted $1$ to correct for the term $n = 0$, since $x_0$ does not satisfy Equation \ref{eq:recEqEv}.
        Using Lemma \ref{lemma:conv_id}, with $a_j=q^{j} x_j$ and $b_{n-j}=p^{n-j}$, we have
        \begin{equation*}
            F(z) \ = \ e^z + F(qz)e^{(1-q)z} - 1.
        \end{equation*}
        Then, defining $H(z):=F(z)e^{-z}$, we have
        \begin{equation*}
            H(z) \ = \ H(qz) + (1 - e^{-z}).
        \end{equation*}
        This equation is recursive, since \( H(qz) \) can be expressed in terms of \( H(q^2z) \), \( H(q^3z) \), and so on. In other words,
        \begin{equation*}
            H(z) = H(q^mz) + \sum_{k=0}^{m-1} (1 - e^{-q^kz}) .
        \end{equation*}
        Expanding the first term on the right-hand side:
        \begin{equation*}
            H(q^mz) \ = \ F(q^mz)e^{-q^mz} \ = \ e^{-q^mz}\sum_{n=0}^{\infty} x_n \frac{(q^mz)^n}{n!}.
        \end{equation*}
        Now assume that $x_n\leq n/p$. Under this assumption, the series above is bounded by the power series $\sum_{n=0}^{\infty} \frac{n}{p}\frac{(q^mz)^n}{n!}$, which converges uniformly for all $z\in\mathbb{R}$ by Lemma \ref{lemma:CH}, since
        \begin{equation*}
            \lim_{n\to\infty}\sqrt[n]{a_n}\ =\ \lim_{n\to\infty}\sqrt[n]{\frac{n}{p \cdot n!}}\ =\ 0.
        \end{equation*}
       The uniform convergence of the series, combined with the continuity of its partial sums, ensures the continuity of the full series. Therefore, taking the limit as $m \to \infty$, and noting that $q \in (0,1)$ implies $q^m z \to 0$, we obtain
        \begin{equation*}
            H(z) \ = \ H(0) + \sum_{k=0}^{\infty} (1 - e^{-q^kz}) \ = \ \sum_{k=0}^{\infty} (1 - e^{-q^kz})\implies F(z) \ =\ e^z\sum_{k=0}^{\infty} (1 - e^{-q^kz}).
        \end{equation*}
        The series converges for any $q\in (0,1)$, since $(1 - e^{-q^k z}) \sim q^k z$ as $k \to \infty$ and $z\sum q^k$ is a convergent geometric series. Using the definition of $F(z)$ and the properties of the exponential function, we have
        \begin{equation*}
            \sum_{n=0}^{\infty}\frac{z^n}{n!}x_n\ \ = \ F(z) \ =\ \sum_{k=0}^{\infty}\sum_{n=0}^{\infty}\frac{z^n}{n!}\left(1-(1-q^k)^n\right) \ = \ \sum_{n=0}^{\infty}\frac{z^n}{n!}\sum_{k=0}^{\infty}\left(1-(1-q^k)^n\right),
        \end{equation*}
        where Tonelli's Theorem, found for example in \cite[p.67]{Protter}, justifies swapping the sums. We observe that the resulting expression is solved by
        \begin{equation*}
        x_n \ = \ \sum_{k=0}^{\infty}\left(1-(1-q^k)^n\right) \ = \ \sum_{k=0}^{\infty}\left[1-\sum_{j=0}^{n}\binom{n}{j}(-1)^{j}(q^k)^j\right] \ = \ \sum_{k=0}^{\infty}\sum_{j=1}^{n}\binom{n}{j}(-1)^{j+1}(q^k)^j.
        \end{equation*}
        Inverting the order of the sums and using the properties of the geometric series, we have the desired result
        \begin{equation*}
        x_n \ = \ \sum_{j=1}^{n}\binom{n}{j}(-1)^{j+1}\left(\sum_{k=0}^{\infty}(q^j)^k\right)\ = \ \sum_{j=1}^{n}\binom{n}{j}(-1)^{j+1}\frac{1}{1-q^j}.
        \end{equation*}
        We now need to verify whether $x_n$, as defined in the previous equation, indeed satisfies $x_n \leq n/p$, in accordance with the assumption made earlier. This bound is established in Proposition~\ref{prop:EVBound1}, thereby confirming that the solution we derived is valid.
    \end{proof}
    
    \begin{theorem}\label{th:eq2} 
        Let $s = 1/p \geq1$ and $0<n\leq s$. The second moment (see Equation \eqref{eq:EV2NonRec})
        \begin{equation*}
        \mathbb{E}\left(\left({Y_n^s}\right)^2\right)
        \ = \ \sum_{k=1}^n\binom{n}{k}(-1)^{k+1}\frac{1+(1-p)^k}{(1-(1-p)^k)^2}
        \end{equation*}
        is the solution of the recurrence relation 
        \begin{equation*}
            \mathbb{E}\left(\left({Y_n^{s}}\right)^2\right) \ = \ \frac{\sum_{i=1}^{n-1}\binom{n}{i} p^i (1 - p)^{n - i}\mathbb{E}(\left({Y_{n-i}^{s}}\right)^2)-1 + 2\mathbb{E}(Y_n^{s})}{1- (1 - p)^{n}} 
        \end{equation*}
        defined for every $n>1$ with $\mathbb{E}({Y_1^{s}}^2)=(2-p)/p^2$.
    \end{theorem}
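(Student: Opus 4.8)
The plan is to follow the exponential-generating-function argument of Theorem \ref{th:eq1} almost verbatim, now carrying the already-solved expected-value generating function along as an inhomogeneous term. Write $q = 1-p$, set $w_n := \mathbb{E}((Y_n^s)^2)$ with $w_0 := 0$, and keep $x_n := \mathbb{E}(Y_n^s)$ and $F$, $H$ from Theorem \ref{th:eq1}. First I would check the base case: since $Y_1^s = Z_1^s \sim \mathrm{Geom}(p)$, the second moment of a geometric variable gives $w_1 = (1+q)/p^2 = (2-p)/p^2$, matching the stated initial value. Next, as in \eqref{eq:recEqEv}, I would clear the denominator $1-q^n$ in the recurrence and reabsorb the missing $i=0$ and $i=n$ contributions (using $w_0 = 0$) to rewrite it, for every $n \ge 1$, in the convolution-ready form
\begin{equation*}
w_n \ = \ \sum_{j=0}^n \binom{n}{j} q^j p^{n-j} w_j \ - \ 1 \ + \ 2x_n .
\end{equation*}
A direct check confirms this also reproduces $w_1$, so no special casing is needed beyond $n=0$.

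Then I would introduce $G(z) := \sum_{n=0}^\infty w_n \frac{z^n}{n!}$ and apply the convolution identity (Lemma \ref{lemma:conv_id}) with $a_j = q^j w_j$ and $b_{n-j} = p^{n-j}$, exactly as was done for $F$. After the analogous correction of the spurious $n=0$ term, this yields the functional equation
\begin{equation*}
G(z) \ = \ G(qz)\,e^{(1-q)z} \ - \ (e^z - 1) \ + \ 2F(z).
\end{equation*}
Setting $K(z) := G(z)e^{-z}$ and recalling that $H(z) = F(z)e^{-z} = \sum_{k\ge 0}(1-e^{-q^k z})$ together with the relation $H(z) - H(qz) = 1 - e^{-z}$ established inside Theorem \ref{th:eq1}, the inhomogeneous terms collapse neatly into
\begin{equation*}
K(z) \ = \ K(qz) \ + \ H(z) \ + \ H(qz).
\end{equation*}
Iterating this $m$ times and letting $m \to \infty$, so that $q^m z \to 0$, $K(q^m z) \to K(0) = w_0 = 0$, and $H(q^m z)\to 0$, the telescoped sum $\sum_{k=0}^{m-1}\bigl[H(q^k z) + H(q^{k+1}z)\bigr]$ reorganizes (by swapping the order of summation) into $\sum_{j\ge 0}(2j+1)(1-e^{-q^j z})$, giving $K(z) = \sum_{j\ge 0}(2j+1)(1-e^{-q^j z})$ and hence $G(z) = e^z \sum_{j\ge 0}(2j+1)(1-e^{-q^j z})$.

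Finally, reading off the coefficient of $z^n/n!$ (justifying the interchange of the two summations by Tonelli's theorem, as in Theorem \ref{th:eq1}) gives $w_n = \sum_{j\ge 0}(2j+1)\bigl(1-(1-q^j)^n\bigr)$. Expanding $(1-q^j)^n$ by the binomial theorem, swapping the finite and infinite sums, and evaluating $\sum_{j\ge 0}(2j+1)x^j = (1+x)/(1-x)^2$ (a combination of geometric series, cf.\ Lemma \ref{lemma:series}) at $x = q^k$ produces
\begin{equation*}
w_n \ = \ \sum_{k=1}^n \binom{n}{k}(-1)^{k+1}\frac{1+q^k}{(1-q^k)^2},
\end{equation*}
the claimed formula. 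The step I expect to be the main obstacle is exactly the one from Theorem \ref{th:eq1}: justifying $K(q^m z) \to K(0)$ requires an a priori growth bound on $w_n$ ensuring uniform convergence of $G(q^m z)$ via Cauchy--Hadamard (Lemma \ref{lemma:CH}). Whereas the expectation case invoked $x_n \le n/p$ from Proposition \ref{prop:EVBound1}, here I would instead supply a polynomial bound such as $w_n = \mathrm{Var}(Y_n^s) + \mathbb{E}(Y_n^s)^2 \le ns(s-1) + (ns)^2$, available from the bounds of Section \ref{s:6}, which forces $\lim_{n} \sqrt[n]{w_n/n!} = 0$ and closes the convergence gap.
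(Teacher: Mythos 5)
Your proposal is correct and follows essentially the same exponential-generating-function route as the paper's own proof: your functional equation $K(z)=K(qz)+H(z)+H(qz)$ is identical to the paper's $T(z)=T(qz)+2H(qz)+(1-e^{-z})$ after applying the identity $H(z)-H(qz)=1-e^{-z}$, and both telescope to $\sum_{t\ge 0}(2t+1)\left(1-e^{-q^tz}\right)$ and hence to the stated closed form. The only cosmetic difference is the a priori growth bound used for the Cauchy--Hadamard step: the paper assumes $y_n\le 2n/p^2$ and verifies it via Proposition \ref{prop:VarBound1}, whereas you invoke the polynomial bound $ns(s-1)+(ns)^2$ from Propositions \ref{prop:VarBound2} and \ref{prop:EVBound1}; either one suffices.
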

    \begin{proof}
        This proof closely follows the approach used in the proof of Theorem \ref{th:eq1}. We adopt the same notation. Let $y_n = \mathbb{E}\left(\left({Y_n^{s}}\right)^2\right)$ for $n\geq1$ and define $y_0:=0$.\\
        For $n = 1$, we have $y_1 = (2-p)/p^2$, and the theorem holds in this case. This also includes the case $s = 1$.\\
        For $n > 1$ and $s > 1$, we can rewrite the recurrence as
        \begin{equation*}
            y_n \ = \ 1  + 2\sum_{i=0}^n \binom{n}{i} q^i p^{n-i}x_i + \sum_{i=0}^n \binom{n}{i} q^i p^{n-i}y_i.
        \end{equation*}
        Define the exponential generating function $G(z) := \sum_{n=0}^{\infty} y_n \frac{z^n}{n!}$ and its transformed counterpart $T(z) := G(z)e^{-z}$. Following the same steps as in the previous proof and using linearity, we derive the functional equation:
        \begin{equation*}
            T(z) \ = \ T(qz) + 2H(qz) + (1 - e^{-z})
        \end{equation*}
        where $H(z)$ is defined as in the previous proof.\\
        Assume now that $y_n\leq2n/p^2$. Under this assumption, we can apply Lemma \ref{lemma:CH} to show that the series defining $T(z)$ converges uniformly on $\mathbb{R}$, and hence $T(z)$ is well-defined and continuous for all $z \in \mathbb{R}$. Therefore, since $q \in (0,1)$, it follows that $T(q^m z) \to 0$ as $m \to \infty$. Iterating the previous equation and substituting the explicit form of $H(z)$, we get
        \begin{equation*}
            G(z) \ = \ e^z\sum_{k=0}^{\infty}\left[ 2H\left(q^{k+1}z\right)+ \left(1 - e^{-q^kz}\right)\right] \ = \ e^z\sum_{k=0}^{\infty} \left[2\sum_{l=0}^{\infty}\left(1-e^{-q^{(l+k+1)}z}\right)+\left(1 - e^{-q^kz}\right)\right].
        \end{equation*}
        We observe that there exist $t$ combinations of $(l,k)$ such that $(l+k+1)=t$, as we can have $k\in(0,1,\dots,t-1)$. Therefore we write
        \begin{equation*}
            G(z) \ = \ \sum_{t=0}^{\infty}(2t+1)(e^z-e^{(1-q^t)z})\implies y_n \ = \ \sum_{t=0}^{\infty}(2t+1)(1-(1-q^t)^n).
        \end{equation*}
        From the previous proof we know that
        \begin{equation*}
            y_n \ = \ \sum_{t=0}^{\infty}(2t+1)\sum_{j=1}^{n}\binom{n}{j}(-1)^{j+1}(q^t)^j \ = \ \sum_{j=1}^{n}\binom{n}{j}(-1)^{j+1}\sum_{t=0}^{\infty}(2t+1)(q^j)^t.
        \end{equation*}
        Using the geometric series identity and Lemma~\ref{lemma:series}, we have the desired result:
        \begin{equation*}
            y_n \ = \ \sum_{j=1}^{n}\binom{n}{j}(-1)^{j+1}\left(\frac{2q^j}{(1-q^j)^2}+\frac{1}{1-q^j}\right) \ = \ \ \sum_{j=1}^n\binom{n}{j}(-1)^{j+1}\frac{1+q^j}{(1-q^j)^2}.
        \end{equation*}
        We now need to verify whether $y_n$, as defined in the previous equation, indeed satisfies $y_n \leq 2n/p^2$, in accordance with the assumption made earlier. This bound is established in Proposition~\ref{prop:VarBound1}, thereby confirming that the solution we derived is valid.
    \end{proof}

\section{Elementary Bounds}
In this appendix we present elementary bounds, mainly used as a benchmark to evaluate the bounds in the main corpse of the paper.

\begin{proposition} \label{prop:EVBound1}
Assume $s\geq n>0$, $\ {Z_i^s}\overset{\mathrm{iid}}{\sim}\mathrm{Geom}(1/s)$, $\ Y_n^{s}=\max_{i\leq n}{Z_i^s}$. Then
\begin{equation} \label{eq:EVBound1}
    n \ \leq \  s \ \leq \ \mathbb{E}(Y_n^{s}) \ \leq \ n s.  
\end{equation}
\end{proposition}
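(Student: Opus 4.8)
The plan is to establish the chain $n \leq s \leq \mathbb{E}(Y_n^s) \leq ns$ in three independent pieces, each of which is elementary. The leftmost inequality $n \leq s$ is nothing more than the standing hypothesis of the proposition, so there is nothing to prove for that link.

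For the middle inequality $s \leq \mathbb{E}(Y_n^s)$, I would exploit the fact that a maximum dominates any single coordinate. Since $Y_n^s = \max_{i \leq n} Z_i^s \geq Z_1^s$ holds pointwise, monotonicity of the expectation gives $\mathbb{E}(Y_n^s) \geq \mathbb{E}(Z_1^s)$. Recalling that each $Z_i^s \sim \mathrm{Geom}(1/s)$ is supported on $\{1, 2, \dots\}$ with mean $1/p = s$, this immediately yields $\mathbb{E}(Y_n^s) \geq s$.

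For the rightmost inequality $\mathbb{E}(Y_n^s) \leq ns$, I would bound the maximum by the sum of the (nonnegative) random variables, using $\max_{i \leq n} Z_i^s \leq \sum_{i=1}^n Z_i^s$ pointwise. Applying linearity and monotonicity of the expectation then gives $\mathbb{E}(Y_n^s) \leq \sum_{i=1}^n \mathbb{E}(Z_i^s) = n s$, since the $Z_i^s$ are identically distributed with common mean $s$.

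There is no genuine obstacle in this argument; the only point that demands care is fixing the convention for the geometric distribution, namely support starting at $1$ so that $\mathbb{E}(Z_i^s) = s$ (rather than $s-1$). It is precisely this normalization that makes both nontrivial bounds collapse to the clean values $s$ and $ns$.
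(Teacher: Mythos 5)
Your proposal is correct and follows essentially the same route as the paper's own proof: $n \leq s$ by hypothesis, $\mathbb{E}(Y_n^s) \geq \mathbb{E}(Z_1^s) = s$ by monotonicity since the maximum dominates a single coordinate, and $\mathbb{E}(Y_n^s) \leq ns$ by bounding the maximum of the nonnegative variables by their sum and applying linearity. Your remark on fixing the support of the geometric distribution at $\{1,2,\dots\}$ is a sensible point of care, and your pointwise justification of $\max \leq \sum$ via nonnegativity is if anything slightly cleaner than the paper's phrasing.
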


\begin{proof}
    We have \(n \leq s\) (by assumption). Since \(Y_n^{s} \ = \ \max_{i\leq n} {Z_i^s} \ \geq \ Z_1\), it follows that \(\mathbb{E}(Y_n^{s}) \ \geq \ \mathbb{E}(Z_1) \ = \ s\) by the monotonicity of the expected value. Also, since \(\mathbb{E}({Z_i^s}) \ = \ s \ > \ 0\), we have
\begin{equation*}
\mathbb{E}(Y_n^{s}) \ = \ \mathbb{E}\left(\max_{i\leq n} {Z_i^s}\right) \ \leq \ \mathbb{E}\left( \sum_{i=1}^n {Z_i^s} \right) \ = \ \sum_{i=1}^n \mathbb{E}({Z_i^s}) \ = \ ns
\end{equation*}
by the linearity of the expected value.
\end{proof}

It follows that $\mathbb{E}(Y_n^{s})=O(n)$ for fixed values of $s$ and $\mathbb{E}(Y_n^{s})=O(s)$ for fixed values of $n$. 

\begin{proposition} \label{prop:VarBound1}
    Assume $s \ \geq \ n \ > \ 0$, $\ {Z_i^s}\overset{\mathrm{iid}}{\sim}\mathrm{Geom}(1/s)$, $\ Y_n^{s}\ = \ \max_{i\leq n}{Z_i^s}$. Then
    \begin{equation}
    s^2\left(2-\frac{1}{s}\right) \ \leq \ \mathbb{E}\left(\left({Y_n^{s}}\right)^2\right) \ \leq \ ns^2\left(2-\frac{1}{s}\right),
    \end{equation}
    and therefore
    \begin{equation} \label{eq:VarBound1}
        s^2-s \ \leq \ \mathrm{Var}(Y_n^{s}) \ \leq \ s^2(2n-1)-ns.
    \end{equation}
\end{proposition}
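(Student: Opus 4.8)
The plan is to deduce both variance bounds from the two second-moment bounds, proving those first, and to isolate the variance lower bound as the only genuinely delicate part. I would begin by recording the single-die second moment: since $Z_i^s\sim\mathrm{Geom}(1/s)$ has mean $s$ and variance $s^2-s$, we get $\mathbb{E}((Z_i^s)^2)=(s^2-s)+s^2=s^2(2-1/s)$ (equivalently, specialize Equation \eqref{eq:EV2NonRec} to $n=1$). For $Y_n^s$ I would use the two pointwise inequalities $Z_1^s\leq Y_n^s$ and $(Y_n^s)^2=\max_{i\leq n}(Z_i^s)^2\leq\sum_{i\leq n}(Z_i^s)^2$, both valid because all the variables are positive. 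Taking expectations and invoking monotonicity and linearity gives $s^2(2-1/s)\leq\mathbb{E}((Y_n^s)^2)\leq n\,s^2(2-1/s)$, which are exactly the claimed second-moment bounds. The variance upper bound is then immediate from $\mathrm{Var}(Y_n^s)=\mathbb{E}((Y_n^s)^2)-\mathbb{E}(Y_n^s)^2$: inserting the second-moment upper bound and the bound $\mathbb{E}(Y_n^s)\geq s$ from Proposition \ref{prop:EVBound1} yields $\mathrm{Var}(Y_n^s)\leq n s^2(2-1/s)-s^2=s^2(2n-1)-ns$.

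The variance lower bound $\mathrm{Var}(Y_n^s)\geq s^2-s$ is the crux, and the naive route fails: pairing the second-moment lower bound with $-\mathbb{E}(Y_n^s)^2$ is useless since $\mathbb{E}(Y_n^s)$ may be as large as $ns$, and the tempting claim that the variance of a maximum dominates the variance of a single summand is false for bounded variables (it fails already for i.i.d.\ Bernoullis), so the geometric structure must be exploited. My plan is to prove $\mathrm{Var}(Y_n^s)\geq s^2-s=\mathrm{Var}(Y_1^s)$ by induction on $n$ via the recurrences of Theorems \ref{thm:EVRecFormula} and \ref{thm:VarRecForm}. Writing $\mu_n=\mathbb{E}(Y_n^s)$, $M_n=\mathbb{E}((Y_n^s)^2)$, $q=1-p$, and $P_{n,k}=\binom{n}{k}p^{n-k}q^{k}$, I would substitute the inductive hypothesis $M_k\geq(s^2-s)+\mu_k^2$ for $1\leq k\leq n-1$ together with the exact value $M_0=0$ into the second-moment recurrence, using $\sum_{k=1}^{n-1}P_{n,k}=1-p^n-q^n$. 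This reduces the goal to a scalar inequality in $\mu_n$, $\sum_{k=1}^{n-1}P_{n,k}\mu_k$, and $\sum_{k=1}^{n-1}P_{n,k}\mu_k^2$. Bounding the last sum below by Cauchy--Schwarz (dividing the square of the first sum by $\sum_{k=1}^{n-1}P_{n,k}$) and replacing $\sum_{k=1}^{n-1}P_{n,k}\mu_k$ by its closed form $(1-q^n)\mu_n-1$ from the mean recurrence eliminates every unknown $\mu_k$ with $k<n$.

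The main obstacle is closing the resulting scalar inequality, which after these substitutions takes the shape
\[
q^n+p^n\big((1-q^n)\mu_n^2-2\mu_n+1\big)\ \geq\ (s^2-s)\,p^n\,(1-q^n-p^n).
\]
The quadratic in $\mu_n$ is increasing for $\mu_n\geq 1/(1-q^n)$, and since $s\geq 1/(1-q^n)$ for every $n\geq 1$, the bound $\mu_n\geq s$ from Proposition \ref{prop:EVBound1} lets me plug in the worst case $\mu_n=s$. Clearing denominators and simplifying with $sp=1$ and $1-q=p$, the inequality collapses to $(1-p^{n-1})(q^{n-1}-p^{n-1})\geq 0$, which is manifest because $s\geq n\geq 2$ forces $0<p\leq 1/2\leq q<1$, making both factors nonnegative; the base case $n=1$ is the equality $\mathrm{Var}(Y_1^s)=s^2-s$. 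I expect the bookkeeping in this reduction -- tracking the absorbing $k=0$ term and checking that the Cauchy--Schwarz step does not lose too much -- to be where a sign slip is most likely to hide, so I would verify the reduced inequality numerically against the exact formula for small $n$ before trusting it.
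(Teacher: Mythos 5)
Your proposal is correct, and on the one delicate point it goes beyond the paper rather than merely differing from it. For the second-moment bounds your argument is the paper's own: the pointwise sandwich $(Z_1^s)^2\leq\max_{i\leq n}(Z_i^s)^2\leq\sum_{i=1}^n(Z_i^s)^2$ together with $\mathbb{E}\left((Z_i^s)^2\right)=s^2(2-1/s)$, and the variance upper bound then follows by subtracting $\mathbb{E}(Y_n^s)^2\geq s^2$, which is clearly the reading intended by the paper's ``and therefore.'' The divergence is the lower bound $\mathrm{Var}(Y_n^s)\geq s^2-s$: the paper's proof stops at the second-moment bounds and offers no derivation of it, and none is possible along those lines, since lower-bounding $\mathbb{E}\left((Y_n^s)^2\right)-\mathbb{E}(Y_n^s)^2$ by $s^2(2-1/s)-s^2$ would require $\mathbb{E}(Y_n^s)\leq s$, the \emph{reverse} of Proposition \ref{prop:EVBound1} (strictly so for $n\geq2$). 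You diagnosed exactly this, rightly dismissed the false general principle that a maximum has variance at least that of one coordinate, and built a complete substitute: induction through the recurrences of Theorems \ref{thm:EVRecFormula} and \ref{thm:VarRecForm}, Cauchy--Schwarz in the form $\sum_{k=1}^{n-1}P_{n,k}\mu_k^2\geq\bigl(\sum_{k=1}^{n-1}P_{n,k}\mu_k\bigr)^2/\sum_{k=1}^{n-1}P_{n,k}$ (your notation), the identity $\sum_{k=1}^{n-1}P_{n,k}\mu_k=(1-q^n)\mu_n-1$, and monotonicity of the resulting quadratic in $\mu_n$ on $[1/(1-q^n),\infty)\supseteq[s,\infty)$ to justify plugging in the worst case $\mu_n=s$. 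I verified the algebra: at $\mu_n=s=1/p$ the difference of the two sides of your reduced inequality equals $q\,(1-p^{n-1})(q^{n-1}-p^{n-1})$, which is nonnegative precisely because $s\geq n\geq2$ forces $p\leq\tfrac{1}{2}\leq q$ (the base case $n=1$ being an equality), so the induction closes. The trade-off: the paper's order-statistics argument is two lines but, as written, establishes only three of the four stated inequalities; your argument needs the Markov-chain recurrences and the identification of $Y_n^s$ with the absorption time $T_n^s$ (legitimate here, and consistent with Appendix B), but it is the only actual proof of the variance lower bound on the table. It is also worth noting in your write-up that this is where the hypothesis $s\geq n$ is genuinely used, through $p\leq1/2$.
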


\begin{proof}
    Since ${Z_i^s} > 0$, then $\left({Y_n^{s}}\right)^2 \ = \ (\max_{i \leq n} {Z_i^s})^2 \ = \ \max_{i \leq n} \left( Z_i^s \right) ^2$.\\We observe that
    \begin{equation*}
        \left(Z_1^s\right)^2 \ \leq \ \max_{i \leq n} \left( Z_i^s \right) ^2 \ \leq \ \sum_{i=1}^n \left( Z_i^s \right) ^2.
    \end{equation*}
    Since each ${Z_i^s} \sim \mathrm{Geom}(1/s)$, its second moment can be found in \cite[p.97]{casella2002statistical} and it is
    \begin{equation*}
      \mathbb{E}\left(\left( Z_i^s \right) ^2\right) \ = \ \frac{2 - \frac{1}{s}}{\left(\frac{1}{s}\right)^2} \ = \ s^2\left(2 - \frac{1}{s} \right).
    \end{equation*}
    Therefore, by the monotonicity and linearity of the expected value, it follows that
    \begin{equation*}
      s^2\left(2 - \frac{1}{s} \right) \ = \ \mathbb{E}\left(\left( Z_1^s \right) ^2\right) \ \leq \ \mathbb{E}\left(\max_{i\leq n}\left( Z_i^s\right) ^2\right) \ \leq \ \sum_{i=1}^n \mathbb{E}\left(\left( Z_i^s \right) ^2\right) \ = \ ns^2\left(2 - \frac{1}{s} \right).
    \end{equation*}
\end{proof}

It follows that $\mathrm{Var}(Y_n^{s}) = O(s^2)$ (as $s \to \infty$, for fixed values of $n$). \\

\vspace{\baselineskip}

\printbibliography

\vspace{\baselineskip}

\end{document}